\documentclass[12pt,reqno]{amsart}
\usepackage{soul}
\usepackage{multicol,amsmath,graphics, mathtools, cancel,soul,color,bbm,amsfonts,dsfont,tikz,mathrsfs,amssymb,bm,hyperref,comment,xcolor,pgfplots,stmaryrd,amsthm,url,subfigure,comment}

\usepackage[left=1in, right=1in, top=1.1in,bottom=1.1in]{geometry}
\usetikzlibrary{matrix,patterns,positioning,arrows}
\numberwithin{equation}{section}

\hypersetup{
    colorlinks=true, %set true if you want colored links
    linktoc=all,     %set to all if you want both sections and subsections linked
    linkcolor=blue,  %choose some color if you want links to stand out
}

\newcommand{\E}{\mathbb{E}}

\newcommand{\R}{\mathbb{R}}

\newcommand{\vertiii}[1]{{\left\vert\kern-0.25ex\left\vert\kern-0.25ex\left\vert #1
    \right\vert\kern-0.25ex\right\vert\kern-0.25ex\right\vert}}

\def\R{\mathbb{R}}

\def\dh2l{\mathbf{d}_{\mathbb{H}_{2\ell}}}
\def\d2{\mathbf{d}_2}

\newtheorem{theorem}{Theorem}[section]
\newtheorem{proposition}[theorem]{Proposition}
\newtheorem{lemma}[theorem]{Lemma}
\newtheorem{corollary}[theorem]{Corollary}

\theoremstyle{definition}
\newtheorem{definition}[theorem]{Definition}

\theoremstyle{remark}
\newtheorem{remark}[theorem]{Remark}

\def\1{\mathbbm{1}}

\makeatletter
\@namedef{subjclassname@1991}{2020 Mathematics Subject Classification}
\makeatother

\begin{document}
\title{Hockey-Stick Domination and Distributional Comparison on Finite Posets}
\author{Arturo Jaramillo, Sayl\'e Sigarreta}
\address{Arturo Jaramillo: Department of Probability and statistics, Centro de Investigaci\'on en matem\'aticas (CIMAT)}
\email{jagil@cimat.mx}
\address{Sayl\'e Sigarreta: Facultad de Ciencias F\'isico Matem\'aticas, Benem\'erita Universidad Aut\'onoma de Puebla}
\email{sayle.sigarretari@alumno.buap.mx}

\keywords{Convex orders, Algebraic Combinatorics, Limit theorems}
\date{\today}

\subjclass[2020]{60E15, 60B10, 06A07, 06A11, 05A19}

\begin{abstract}
We develop a framework for comparing probability measures on finite posets via hockey-stick domination, an order relation defined through interval-counting test functions. The theory introduces poset integrals, derivatives, power functions and the associated moment functionals, all of which are invariant under poset isomorphisms. We prove that hockey-stick domination admits an exact quantitative characterization: whenever $\mu$ is dominated by $\nu$ in the hockey-stick order, the corresponding Zolotarev-type distance is equal to one half of the second-order poset moment of $\nu-\mu$. We further develop a constructive theory for generating such domination relations. In particular, we show that hockey-stick domination is preserved under direct products, disjoint unions, ordinal sums, and suitable ideal restrictions, yielding natural families of examples on chains, Boolean posets, rectangular lattices, rooted trees, and Young diagrams.

\end{abstract}

%==========================================================================================================================
%Introduction
%==========================================================================================================================

\maketitle

\section{Introduction and motivations}\label{Sec:introandmotivations}
\noindent This paper is devoted to the comparison of probability measures on finite posets through the development of a M\"obius-inversion-based calculus, invariant under poset isomorphisms, combined with a suitably designed notion of domination between measures, which we refer to as hockey-stick domination. Our starting point is the work by Boutsikas-Vaggelatou
\cite{MR1909919}, which links convex order to quantitative discrepancies
between laws via the stop-loss metric and Zolotarev's ideal metric. For
probability measures $\mu,\nu$ on $\R$, set
\begin{align}
d_{\zeta}(\mu,\nu)
  &:=
  \int_{\R}
  \Bigl|
  \int_{\R}(x-t)_{+}\,\mu(dx)
  -
  \int_{\R}(x-t)_{+}\,\nu(dx)
  \Bigr|\,dt.
  \label{eq:solotarevdef}
\end{align}
If $\mu$ is dominated by $\nu$ in convex order, namely if
\begin{align*}
\int_{\R}V(x)(\nu-\mu)(dx)\geq 0
\end{align*}
for every convex function $V:\R\rightarrow\R$, then
\begin{align}
d_{\zeta}(\mu,\nu)
  &\leq
  \frac{1}{2}
  \int_{\R}x^2(\nu-\mu)(dx).
  \label{ineq:dmmunubound}
\end{align}
In particular, for a sequence of probability measures convexly dominated by a fixed target measure, convergence of the variance to that of the target implies convergence in law. This gives a moment comparison principle: under the domination assumption, the difference between the corresponding quadratic moments controls a metric discrepancy. In this sense, the result belongs to a broader family of principles in which convergence of a distinguished moment-type quantity becomes sufficient for convergence in distribution, as in the Fourth Moment Theorem on a fixed Wiener chaos
\cite{NourdinPeccati2009}.\\

\noindent
Relation \eqref{ineq:dmmunubound} naturally raises the question of whether analogous comparison principles can be formulated beyond the ordered real line. In this work, we take a first step in this direction by focusing on finite posets. These provide a convenient class of ordered discrete spaces and arise naturally in many combinatorial settings, including partitions of finite sets, divisors of an integer, flats of a matroid, and partitions of integers (see \cite{stanley2011enumerative,trotter1992combinatorics}). A probability measure on such a poset may be viewed as the law of a random combinatorial object. Although these objects are often studied through real-valued summaries such as sizes, ranks, cardinalities, or numbers of parts, our aim is to compare probability measures directly on the poset, without first pushing them forward to the real line.\\

\noindent
To extend the comparison framework of Boutsikas-Vaggelatou to this setting, one needs substitutes for the affine and differential structure available on $\mathbb{R}$. The main idea of this work is that M\"obius inversion provides a natural replacement for the missing structure. Starting from the incidence algebra of a finite poset, in Section~\ref{sec:calcoverlattices}, we introduce poset integral and derivative operators, both invariant under poset isomorphisms

%================================================================================

\subsection{Main Contributions}
We now present the main contributions of this work, consisting of domination-based metric identities, together with constructive mechanisms for assembling such domination relations in concrete poset settings.\\

\noindent A fundamental ingredient in our construction is the formulation of power functions $x \mapsto \phi_d(x)$ on a poset $\mathcal{P}$, with $d$ in $\mathbb{N}$. These functions are constructed by iterating the poset integral introduced in Section~\ref{sec:calcoverlattices}, applied to the function identically equal to one, and multiplying by the factor $d!$. Once defined, they induce moment functionals of signed measures $\rho$ on $\mathcal P$ via
\begin{align*}
m_d[\rho]
  &:= \int_{\mathcal P} \phi_d(x)\,\rho(dx).
\end{align*}
\noindent
The second ingredient is a family of test functions that plays the role of the real-line hockey-stick functions $(x-t)_{+}$. For $x,z$ in $\mathcal{P}$, we define
\begin{align*}
\Phi_x(z)
:=
\sum_{y\in\mathcal{P}}
\mathbbm{1}_{\{\,x\le y\le z\,\}}.
\end{align*}
Thus, $\Phi_x(z)$ counts the number of elements lying between $x$ and $z$. The family $\{\Phi_x : x \in \mathcal{P}\}$ then induces a comparison preorder on probability measures on $\mathcal P$, introduced in Subsection~\ref{subhs}. We call this relation hockey-stick domination and denote it by $\preceq_{HS}$.\\

\noindent
In Section~\ref{sec:zolotarev}, we define a Zolotarev-type discrepancy $d_\zeta$ adapted to finite posets. With the moment functionals and hockey-stick domination in place,  Theorem~\ref{thm:main}, the first main result of the paper, establishes that under hockey-stick domination this discrepancy admits an exact representation in terms of a second-order poset moment difference.

\begin{theorem}\label{thm:main}
Let $\mu,\nu$ be probability measures on $\mathcal P$. If $\mu$ is dominated by $\nu$ in the hockey-stick order, then
\begin{align*}
d_{\zeta}(\mu,\nu)
  &= \frac12\, m_2[\nu-\mu].
\end{align*}
\end{theorem}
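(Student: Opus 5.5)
The plan is to make the Zolotarev-type discrepancy of Section~\ref{sec:zolotarev} collapse, under domination, into a single integral against one explicit function, and then to recognise that function as $\tfrac12\phi_2$. I expect (and will check against Subsection~\ref{subhs} and Section~\ref{sec:zolotarev}) that the definitions have the following shapes. The discrepancy should be the poset analogue of \eqref{eq:solotarevdef}, with the integral over $t$ replaced by a sum over $x\in\mathcal P$ and the real hockey-sticks replaced by $\Phi_x$:
\begin{align*}
d_\zeta(\mu,\nu)=\sum_{x\in\mathcal P}\Bigl|\int_{\mathcal P}\Phi_x\,d\mu-\int_{\mathcal P}\Phi_x\,d\nu\Bigr|.
\end{align*}
The relation $\mu\preceq_{HS}\nu$ should mean that $\int\Phi_x\,d(\nu-\mu)\ge 0$ for every $x$.

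\textbf{Step 1 (removing the absolute values).} Under hockey-stick domination every summand is nonnegative, so the absolute values can be dropped:
\begin{align*}
d_\zeta(\mu,\nu)=\sum_{x}\int\Phi_x\,d(\nu-\mu).
\end{align*}
Since $\mathcal P$ is finite, the sum and the integral can be interchanged. This gives
\begin{align*}
d_\zeta(\mu,\nu)=\int_{\mathcal P}\Psi(z)\,(\nu-\mu)(dz),
\qquad \Psi(z):=\sum_{x}\Phi_x(z)=\#\{(x,y): x\le y\le z\}.
\end{align*}

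\textbf{Step 2 (identifying $\Psi$).} Next I compute $\phi_2$ from its definition as $2!$ times the twice-iterated poset integral of the constant function $1$. If the poset integral of Section~\ref{sec:calcoverlattices} is the zeta-type operator $If(z)=\sum_{y\le z}f(y)$, then
\begin{align*}
I1(z)=\#\{x\le z\},
\end{align*}
and consequently
\begin{align*}
I^21(z)=\sum_{y\le z}\#\{x\le y\}=\Psi(z).
\end{align*}
This means $\phi_2=2\Psi$. Substituting into Step 1 gives
\begin{align*}
d_\zeta(\mu,\nu)=\tfrac12\int\phi_2\,d(\nu-\mu)=\tfrac12\,m_2[\nu-\mu],
\end{align*}
which is the claim.

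\textbf{Main obstacle.} The delicate point is Step 2, because it depends on the precise normalization of the poset integral. The paper may use strict inequalities, a base point, or a M\"obius-shifted kernel. In that case $2I^21$ will differ from $2\Psi$ by lower-order terms. These would be a combination of the constant $1$ and of $I1$, or of the functions $\Phi_x$ themselves.

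I would handle these discrepancies as follows. Constant terms integrate to zero against $\nu-\mu$, because both are probability measures. Any first-order term such as $\phi_1$ or $\sum_x\Phi_x$ with a shifted index also has to be disposed of. If the hockey-stick order builds in equality of first moments (as convex order on $\R$ does), such terms vanish directly. Otherwise I would rewrite the discrepancy term as a sum of $\Phi_x$-integrals and check that it is already absorbed into the definition of $d_\zeta$.

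So the first thing I would do is write out $I^21$ using the incidence-algebra definition, as a convolution $\zeta*\zeta$ evaluated against $1$. I would then match it term by term with $\sum_x\Phi_x$, keeping track of any boundary elements such as $\hat0$.
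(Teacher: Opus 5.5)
Your Step 2 is correct and matches the paper. Step 1, however, rests on a guessed definition of $d_\zeta$ that is not the paper's, and that is where the real content of the proof lies. In Section~\ref{sec:zolotarev}, \eqref{eq:dzeta-def} defines $d_\zeta(\mu,\nu)$ variationally, in the spirit of Zolotarev's ideal metric: it is the supremum of $|\langle\rho,f\rangle|$ over all $f$ with $\|D^2[f]\|_\infty\le 1$, where $\rho=\nu-\mu$. It is not defined as $\sum_x|\langle\rho,\Phi_x\rangle|$. So ``dropping the absolute values'' proves the identity for a different quantity.

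What has to be established is the equality $d_\zeta(\mu,\nu)=\sum_z\langle\rho,\Phi_z\rangle$ under domination, and this needs two inequalities.
\begin{itemize}
\item \emph{Upper bound.} Expand an arbitrary admissible $f$ as $f=\sum_z D^2[f](z)\,\Phi_z$. This is Proposition~\ref{p0}~$ii)$, which follows from $f=I^2[D^2[f]]$ and $\Phi_z=I^2[\delta_z]$. Then $|\langle\rho,f\rangle|\le\sum_z|\langle\rho,\Phi_z\rangle|=\sum_z\langle\rho,\Phi_z\rangle$, where the last equality uses domination.
\item \emph{Lower bound.} You must exhibit an admissible test function attaining this value. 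The function $f=\tfrac12\phi_2=I^2[\mathbf 1]$ has $D^2[f]\equiv 1$, so it is admissible, and it gives $\langle\rho,f\rangle=\sum_z\langle\rho,\Phi_z\rangle$.
\end{itemize}
Your guessed formula $d_\zeta=\sum_z|\langle\rho,\Phi_z\rangle|$ is in fact true for all $\mu,\nu$. To see this, take $f=I^2[g]$ with $g(z)$ the sign of $\langle\rho,\Phi_z\rangle$; this works because $D^2$ is a bijection of $\mathfrak F$, so unlike on $\R$ there is no affine kernel to quotient out. But it is a statement that must be proved, not the definition.

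The difficulty you anticipated in Step 2 does not arise. Here $I[f]=f*\zeta$ exactly, with no base point or shift, so $I^2[\mathbf 1](z)=\#\{(x,y):x\le y\le z\}=\sum_x\Phi_x(z)$. Hence $\phi_2=2\sum_x\Phi_x$, which is equivalent to the fact $D^2[\phi_2]=2$ used in the paper.

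The fallback you proposed would also have failed, because hockey-stick domination does not force equal first moments. On a poset with a least element it only gives $m_1[\nu-\mu]=\langle\nu-\mu,\Phi_{0_{\mathcal P}}\rangle\ge 0$; for example, $\delta_1\preceq_{HS}\delta_2$ on $C_2$ although the first moments differ. That extra equality of first moments is precisely what separates $\preceq_{\mathrm{conv}}$ from $\preceq_{HS}$ in Section~\ref{sec:6}.
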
 
 In order to make Theorem~\ref{thm:main}  applicable, one must understand how the hockey-stick domination arises in concrete settings. This is the second central component of the paper. Starting from elementary dominated pairs on finite chains, we show how hockey-stick domination can be transported through direct products, disjoint unions, ordinal sums, and compatible ideal restrictions. These operations generate explicit dominated pairs on Boolean posets, rectangular lattices,   certain families of rooted trees and Young diagrams. The full collection of constructions is developed in Section~\ref{eq:exhsdom}. A key feature of the developed theory is that every object introduced throughout the paper is determined entirely by the order structure of the underlying poset and is consequently preserved under poset isomorphisms.
\\

\noindent
The rest of the paper is organized as follows. Section~\ref{sec:1} recalls the basic material on incidence algebras and M\"obius inversion needed throughout the paper. Section~\ref{sec:calcoverlattices} introduces the poset integral and derivative, develops the associated power functions and moment functionals, and discusses the resulting notions of M\"obius linearity and convexity. Section~\ref{sec:3.5} is devoted to hockey-stick functions and hockey-stick domination, including their basic properties, examples, and their relation to positivity tests for signed measures. Section~\ref{eq:exhsdom} develops several mechanisms for constructing hockey-stick domination, including chains, direct products, disjoint unions, ordinal sums, ideal restrictions, and examples on Boolean posets, rectangular lattices, rooted trees and Young diagrams. Section~\ref{sec:6} discusses convex domination and its relation with hockey-stick domination. Section~\ref{sec:zolotarev} introduces the poset version of the Zolotarev-type distance and compares it with total variation. Finally, Section~\ref{sec:8} is devoted to the proof of Theorem~\ref{thm:main}.

%==============================================================
\section{Preliminaries}\label{sec:1}
For the sake of self-containment and to fix notation, we briefly review the algebraic-combinatorial notions used throughout, within the framework of incidence algebras developed by Gian-Carlo Rota \cite{n11}. Let $\mathcal{P}$ be a finite partially ordered set endowed with a partial order $\leq_{\mathcal{P}}$. We denote by
\begin{equation*}
\mathcal{P}^{(2)} := \{\, (x,y) \in \mathcal{P}^2 \;;\; x \leq_{\mathcal{P}} y \,\}
\end{equation*}
the set of comparable pairs. For a given pair of functions $F,G : \mathcal{P}^{(2)} \to \mathbb{C}$, we define their convolution $F * G$ as the function from $\mathcal{P}^{(2)}$ to $\mathbb{C}$ defined by
\begin{equation*}
(F * G)(x,y)
:= \sum_{\substack{z \in \mathcal{P} \\ x \leq_{\mathcal P} z \leq_{\mathcal P} y}}
F(x,z)\, G(z,y).
\end{equation*}
We extend the definition to the case where $F$ is replaced by a function $f : \mathcal{P} \to \mathbb{C}$ through 
\begin{equation*}
(f * G)(x)
:= \sum_{\substack{y \in \mathcal{P} \\ y \leq_{\mathcal P} x}}
f(y)\, G(y,x).
\end{equation*}
These convolution operations possess the natural algebraic properties expected from an integral-type construction: they are associative and distributive with respect to linear combinations of functions on $\mathcal{P}^{(2)}$ and on $\mathcal{P}$. The function $\delta : \mathcal{P}^{(2)} \to \mathbb{C}$ defined by
\begin{equation*}
\delta(x,y) =
\begin{cases}
1 & \text{if } x = y, \\
0 & \text{if } x <_{\mathcal P} y,
\end{cases}
\end{equation*}
acts as the unit element for convolution, in the sense that
\[
F * \delta = \delta * F = F.
\]
The zeta function of $\mathcal{P}$ is the function 
$\zeta : \mathcal{P}^{(2)} \to \mathbb{C}$ defined by
\begin{equation*}
\zeta(x,y) = 1,
\end{equation*}
for all $(x,y)$ in $\mathcal{P}^{(2)}$. The inverse of $\zeta$ under convolution is called the M\"obius function of $\mathcal{P}$, is defined over $\mathcal{P}^{(2)}$ and is denoted by $\mu_{\mathcal P}$. Observe that $\mu_{\mathcal P}$ is uniquely determined by the relation 
$\mu_{\mathcal P} * \zeta = \delta$, which amounts to the following system 
of equations for the values of $\mu_{\mathcal P}$:
\begin{equation*}
\sum_{\substack{y \in \mathcal{P} \\ x \,\leq_{\mathcal P} \, y \, \leq_{\mathcal P} \, z}} \mu_{\mathcal P}(x,y)
=
\begin{cases}
1 & \text{if } x = z,\\
0 & \text{if } x <_{\mathcal P} z.
\end{cases}
\end{equation*}
Equivalently, $\mu_{\mathcal P}$ is determined by the relation 
$\zeta * \mu_{\mathcal P} = \delta$, which yields the system
\begin{equation} \label{gd}
\sum_{\substack{y \in \mathcal{P} \\ x \, \leq_{\mathcal P} \, y \, \leq_{\mathcal P} \, z}} \mu_{\mathcal P}(y,z)
=
\begin{cases}
1 & \text{if } x = z,\\
0 & \text{if } x <_{\mathcal P} z.
\end{cases}
\end{equation}
Building upon these concepts, we have the celebrated M\"obius inversion formula, which is presented next.
\begin{proposition} \label{thmmobiusinversion}
For two functions $f,g : \mathcal{P} \to \mathbb{C}$, the statement that
\begin{equation*}
f(x) = \sum_{\substack{y \in \mathcal{P} \\ y \leq_{\mathcal P} x }} g(y)
\end{equation*}
for all $x$ in $\mathcal{P}$ is equivalent to
\begin{equation*}
g(x) = \sum_{\substack{y \in \mathcal{P} \\ y \leq_{\mathcal P} x}} f(y) \mu_{\mathcal P}(y,x).
\end{equation*}
\end{proposition}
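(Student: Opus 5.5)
The plan is to rewrite both identities in the language of the incidence algebra and reduce the equivalence to the two defining relations $\zeta * \mu_{\mathcal P} = \delta$ and $\mu_{\mathcal P} * \zeta = \delta$. The hypothesis $f(x) = \sum_{y \le x} g(y)$ is exactly $f = g * \zeta$ under the convolution of a function on $\mathcal P$ with a function on $\mathcal P^{(2)}$ defined above, since $\zeta(y,x)=1$ for $y\le x$. Likewise, the conclusion $g(x) = \sum_{y\le x} f(y)\mu_{\mathcal P}(y,x)$ reads $g = f * \mu_{\mathcal P}$. So the claim is that $f = g*\zeta$ if and only if $g = f*\mu_{\mathcal P}$.

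For the forward direction, assume $f = g*\zeta$. Then
\begin{align*}
f * \mu_{\mathcal P} = (g*\zeta)*\mu_{\mathcal P} = g*(\zeta*\mu_{\mathcal P}) = g*\delta = g.
\end{align*}
Here the mixed associativity $(g*F)*G = g*(F*G)$ for $g$ a function on $\mathcal P$ and $F,G$ on $\mathcal P^{(2)}$ was asserted in the preliminaries. For completeness, it can be checked directly: both sides equal
\begin{align*}
\sum_{y\le z\le x} g(y)F(y,z)G(z,x),
\end{align*}
by exchanging the finite sums over the chain $y\le z\le x$. Finally, $g*\delta = g$ holds because $\delta(y,x)$ kills every term except $y=x$. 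The converse is symmetric: assuming $g = f*\mu_{\mathcal P}$, we get
\begin{align*}
g*\zeta = f*(\mu_{\mathcal P}*\zeta) = f*\delta = f.
\end{align*}

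The only potential obstacle is justifying that the Möbius function is a two-sided inverse, i.e. that the relations $\mu_{\mathcal P}*\zeta=\delta$ and $\zeta*\mu_{\mathcal P}=\delta$ both hold, since the text presents them as equivalent characterizations. To be safe, I would argue this as follows. First, define $\mu_{\mathcal P}$ recursively by the first system, by induction on the size of the interval $[x,z]$; this is possible because $\zeta(x,x)=1$ and $\mathcal P$ is finite. This gives a left inverse. Repeating the recursion with the second system gives a right inverse $\mu'$. Associativity in the incidence algebra then forces
\begin{align*}
\mu_{\mathcal P} = \mu_{\mathcal P}*(\zeta*\mu') = (\mu_{\mathcal P}*\zeta)*\mu' = \mu',
\end{align*}
so $\mu_{\mathcal P}$ is a two-sided inverse. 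With that in hand, the two chains of equalities above complete the proof.
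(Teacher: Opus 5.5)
Your proof is correct. The paper gives no proof of its own and cites the result as classical (Rota), having just recorded exactly the ingredients you use: associativity of convolution and the relations $\mu_{\mathcal P}*\zeta=\delta=\zeta*\mu_{\mathcal P}$. Your argument is the standard derivation from those facts, and your left-inverse/right-inverse step justifies the two-sided inverse property that the paper only asserts as an ``equivalent'' characterization.
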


%==============================================================

\section{M\"obius operators on finite posets”}\label{sec:calcoverlattices}

\noindent In this section, we introduce the notions of derivative and integral over finite posets and their basic properties. Throughout, $\mathcal{P}$ denotes a fixed finite poset, and $\mathfrak{F}$ denotes the space of real-valued functions defined on $\mathcal{P}$. We write $\mathbf{1}$ for the function identically equal to one. We denote by $\mathcal{M}(\mathcal{P})$ the set of signed measures on $\mathcal{P}$ and by $\mathcal{M}_1(\mathcal{P})$ the subset of probability measures. For $f$ in $\mathfrak{F}$ and $\rho$ in $\mathcal{M}(\mathcal{P})$, we use the dual pairing notation
\begin{align*}
\langle \rho, f \rangle
:=
\int_{\mathcal{P}} f(x)\,\rho(dx).
\end{align*}
\noindent We now define the poset integral and derivative.

\begin{definition} \label{d1}
Define the operators $I_{\mathcal P},D_{\mathcal P} : \mathfrak{F} \rightarrow \mathfrak{F}$ as follows:
\begin{align*}
I_{\mathcal{P}}[f](x)
&=
f * \zeta
=
\sum_{y \in \mathcal{P}} f(y) \mathbbm{1}_{\{y \leq x\}},
\end{align*}
and
\begin{align*}
D_{\mathcal{P}}[f](x)
&=
f * \mu_{\mathcal P}
=
\sum_{y \in \mathcal{P}}  f(y) \mu_{\mathcal P}(y, x) \mathbbm{1}_{\{y \leq x\}}.
\end{align*}
\end{definition} 

\noindent When the poset under consideration is clear, we omit its dependence in the notation. By the M\"obius inversion formula, the operators $I_{\mathcal P}$ and $D_{\mathcal P}$ are inverse to each other:
\[
D_{\mathcal P}\big[I_{\mathcal P}[f]\big]
=
f
=
I_{\mathcal P}\big[D_{\mathcal P}[f]\big].
\]
Thus, $I_{\mathcal P}$ may be viewed as an integral-type operator accumulating the values of $f$ over lower intervals, while $D_{\mathcal P}$ plays the role of a derivative obtained through M\"obius inversion. In this sense, the pair $(I_{\mathcal P},D_{\mathcal P})$ satisfies a discrete analog of the fundamental theorem of calculus.

When the poset is a chain, the present construction reduces to the classical calculus of finite differences: convolution with the zeta function corresponds to cumulative summation, while convolution with the M\"obius function coincides with the usual finite-difference operator (see Subsection~\ref{scr}). This correspondence, already observed in Example~2 of~\cite{n11}, places the chain case of our framework within the classical theory of finite differences.

\begin{remark}
A significant difference between chains and general posets is that the operator $D_{\mathcal P}$ is typically nonlocal. Indeed, while finite differences on a chain depend only on neighboring values, the quantity $D_{\mathcal P}[f](x)$ depends on the values of $f$ throughout the lower interval $(-\infty,x]$, weighted by the M\"obius function. Its value is therefore determined by the structure of an entire interval rather than only by immediate cover relations.

At a conceptual level, this construction is analogous to other generalized or nonlocal calculi in which differentiation is recovered from an inverse-kernel relation. In the present setting, the zeta function plays the role of the integral kernel, while the M\"obius function acts as its inverse kernel through incidence convolution. We emphasize, however, that this analogy is only heuristic and that no fractional-calculus structure is developed in this work.
\end{remark}
\noindent The operators $I$ and $D$ are compatible with poset isomorphisms. In the sequel, we fix a poset $\mathcal{P}$ together with a poset isomorphism $T:\mathcal{P}\to\mathcal{Q}$. It readily follows, by a change of variables and the fact that $T$ preserves the order, that for every function $f:\mathcal Q\to\mathbb R$,
\begin{align*}
I_{\mathcal{P}}[ f \circ T]
  &= I_{\mathcal{Q}}[f] \circ T,
&
D_{\mathcal{P}}[f \circ T]
  &= D_{\mathcal{Q}}[f] \circ T.
\end{align*}

\subsection{Power functions, moments, and M\"obius convexity}

With these objects in mind, we define power functions of index $d$ in $\mathbb{N}$ by
\begin{align*}
\phi_{d}^{\mathcal{P}} := d! \, I_{\mathcal P}^{d}[\mathbf{1}],
\end{align*}
where $I_{\mathcal P}^{d}$ denotes the $d$-th iteration of the operator $I_{\mathcal P}$. As before, these functions are invariant under poset isomorphisms, in the sense that
\begin{align*}
\phi_{d}^{\mathcal{P}}=\phi_{d}^{\mathcal{Q}} \circ T.
\end{align*}
The moment of order $d$ of a given element $\rho$ in $\mathcal{M}(\mathcal{P})$ is then defined as
\begin{align*}
m_{d}[\rho] := \langle \rho, \phi_{d}^{\mathcal{P}} \rangle.
\end{align*}
The derivative operator naturally induces notions of linearity and convexity.

\begin{definition}
A function $f$ in $\mathfrak{F}$ is called M\"obius linear if there exists a real constant $c$ such that
\begin{align*}
D[f](x)=c
\end{align*}
for all $x$ in $\mathcal{P}$. We say that $f$ is M\"obius convex if
\begin{align*}
D[f](x)\leq D[f](y)
\end{align*}
whenever $x\leq y$ in $\mathcal{P}$.
\end{definition}

\noindent
Some of the fundamental properties of M\"obius linear and M\"obius convex functions are presented next. In the sequel, we use the notation
\begin{align*}
(-\infty,x]
  &:=\{y\in\mathcal{P}\ ;\ y\leq x\},
&
[x,\infty)
  &:=\{y\in\mathcal{P}\ ;\ x\leq y\}.
\end{align*}
We also write
\begin{align*}
\min(\mathcal P)
:=
\{w\in\mathcal P\ ;\ \text{there is no } y\in\mathcal P \text{ such that } y< w\}
\end{align*}
for the set of minimal elements of $\mathcal P$. One can easily check that
\begin{align*}
(\mathbf{1} * \zeta)(x)=|(-\infty,x]|.
\end{align*}

\begin{lemma} \label{l1}
For a given function $f$ in $\mathfrak{F}$, the following properties hold.
\begin{enumerate}
\item[$i)$] $f$ is M\"obius linear if and only if
\begin{align*}
f(x)=c\,|(-\infty,x]|
\end{align*}
for all $x$ in $\mathcal P$, with $c$ a real constant.

\item[$ii)$] If $D[f]\geq 0$, then $f$ is increasing and nonnegative.

\item[$iii)$] If $D^2[f]\geq 0$, then $f$ is M\"obius convex, increasing and nonnegative.

\item[$iv)$] If $f$ is M\"obius convex and $D[f](x)\geq 0$ for all $x$ in $\min(\mathcal P)$, then $f$ is increasing and nonnegative.

\item[$v)$] If $f$ is M\"obius convex and $D^2[f]\leq 0$, then $D[f]$ is constant and nonpositive on each connected component of the Hasse diagram of $\mathcal{P}$.
\end{enumerate}
\end{lemma}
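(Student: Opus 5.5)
The plan is to write every item in terms of $g:=D[f]$ and use $f=I[g]$, so that $f(x)=\sum_{y\le x} g(y)$. This is the fundamental relation $I\circ D=\mathrm{id}$, which follows from the Möbius inversion formula (Proposition~\ref{thmmobiusinversion}).

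\emph{Items $i)$ and $ii)$.} For $i)$: if $D[f]\equiv c$, then $f=I[c\mathbf 1]=c\,|(-\infty,x]|$, using $(\mathbf 1*\zeta)(x)=|(-\infty,x]|$. Conversely, if $f=c\,I[\mathbf 1]$, then $D[f]=c\mathbf 1$ because $D\circ I=\mathrm{id}$. For $ii)$: if $g\ge 0$, then $f(x)=\sum_{y\le x}g(y)\ge 0$. Moreover, for $x\le x'$ we have $(-\infty,x]\subseteq(-\infty,x']$, so $f(x')-f(x)=\sum_{y\le x',\,y\not\le x}g(y)\ge0$.

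\emph{Item $iii)$.} Apply $ii)$ to the function $D[f]$. Since $D[D[f]]\ge0$, the function $D[f]$ is increasing and nonnegative. "Increasing" is exactly Möbius convexity of $f$. Nonnegativity of $D[f]$, fed again into $ii)$, shows that $f$ is increasing and nonnegative.

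\emph{Item $iv)$.} Möbius convexity says $g$ is increasing. Every $x$ lies above some minimal element $w$, since $\mathcal P$ is finite. Hence $g(x)\ge g(w)\ge0$, so $g\ge0$ everywhere, and $ii)$ concludes.

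\emph{Item $v)$.} Now $g$ is increasing and $D[g]\le0$. The plan is to show that $g$ is constant on each lower set $(-\infty,x]$, by induction on $x$ with respect to the order (height of $x$).
\begin{itemize}
\item Base case: if $x$ is minimal, then $D[g](x)=g(x)$. Applied at minimal elements, $D[g]\le0$ gives $g(x)\le0$.
\item Inductive step: write $g(x)=I[D[g]](x)=\sum_{y\le x}D[g](y)$, so that
\[
g(x)-D[g](x)=\sum_{y<x}D[g](y).
\]
By the inductive hypothesis, for each $y<x$ the value $D[g](y)$ is determined by the lower interval below $y$. The cleanest route is to prove directly the claim that $D[g](y)=0$ for every non-minimal $y$.
\end{itemize}

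The claim follows from a comparison. Suppose $x$ is not minimal, and pick any $w<x$. Then
\[
g(x)-g(w)=\sum_{y\le x,\;y\not\le w}D[g](y)\le0,
\]
since every summand is $\le0$. Monotonicity of $g$ gives the reverse inequality, so equality holds and every summand vanishes. Because $w<x$, the element $x$ itself satisfies $x\not\le w$, so $x$ appears in this sum and $D[g](x)=0$. Hence $D[g]$ vanishes off $\min(\mathcal P)$, and $g(x)=\sum_{w\in\min,\,w\le x}g(w)$.

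From the same equality $g(x)=g(w)$ for all $w<x$, $g$ is constant along every cover relation. It is therefore constant on each connected component of the Hasse diagram, and that constant equals the value at a minimal element, which is $\le0$.

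The only delicate point is this last step: one must use both inequalities (monotonicity of $g$ and the sign of $D[g]$) together to force equality along comparable pairs. The remaining items are routine applications of $f=I[D[f]]$.
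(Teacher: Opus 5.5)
Your proof is correct and follows the paper's argument. Items $i)$--$iv)$ are identical to the paper's. In $v)$, your comparison $g(x)-g(w)=\sum_{y\le x,\,y\not\le w}D[g](y)\le 0$ is item $ii)$ applied to $-D[f]=D[-f]$, which is exactly how the paper shows that $D[f]$ is decreasing (via $iii)$ for $-f$) before combining this with monotonicity. The abandoned induction scaffold in your treatment of $v)$ is unnecessary and can be deleted, since the direct comparison already gives constancy along comparable pairs and nonpositivity at minimal elements.
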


\begin{proof}
If $f$ is M\"obius linear, then $D[f]=c$ for some constant $c \in \mathbb{R}$. Since $f=I[D[f]]$, it follows that
\begin{align*}
f(x)=I[c](x)=\sum_{z\le x} c
= c\,|(-\infty,x]|.
\end{align*}
The converse is immediate from $D[I[c]]=c$.\\

\noindent For the second statement, assume $D[f]\ge0$ and let $x<y$. Using $f=I[D[f]]$, we obtain
\begin{align*}
f(y)-f(x)
&= \sum_{z\le y}D[f](z)-\sum_{z\le x}D[f](z)  \\
&= \sum_{\substack{z\le y\\ z\nleq x}} D[f](z)\ge0.
\end{align*}
Thus $f$ is increasing. Finally, note that
\begin{align*}
f=I[D[f]]\geq 0.
\end{align*}

\noindent The third statement follows by applying $ii)$ with $F=D[f]$, which gives that $D[f]$ is increasing and nonnegative. Therefore $f$ is M\"obius convex, and another application of $ii)$ gives that $f$ is increasing and nonnegative.\\

\noindent For the fourth statement, assume that $f$ is M\"obius convex and that $D[f](x)\ge0$ for every $x$ in $\min(\mathcal P)$. Since $D[f]$ is increasing, every $z$ in $\mathcal P$ lies above at least one minimal element, and therefore $D[f](z)\ge0$. The result then follows from $ii)$.\\

\noindent For the last part, observe that by hypothesis $D^2[-f]\ge0$. By $iii)$, this implies that $-f$ is M\"obius convex, increasing and nonnegative. In particular, $D[-f]$ is increasing. Therefore $D[f]$ is decreasing. On the other hand, since $f$ is M\"obius convex, $D[f]$ is increasing. Hence $D[f]$ is both increasing and decreasing and therefore it is constant along comparable pairs. It follows that $D[f]$ is constant on each connected component of the Hasse diagram of $\mathcal{P}$. Since $-f$ is nonnegative, this constant is nonpositive on each connected component.
\end{proof}

\noindent It is useful to contrast M\"obius convexity with convexity notions already present in the literature, especially those formulated on lattices. Recall that a partially ordered set is called a lattice if every pair of elements admits a least upper bound $x\vee y$ and a greatest lower bound $x\wedge y$; see \cite{stanley2011enumerative,trotter1992combinatorics}. While our setting does not assume such structure, whenever $\mathcal{P}$ is a lattice one may compare M\"obius convexity with supermodularity \cite{topkis1998,murota2003,stanley2011enumerative}. Recall that $f:\mathcal{P}\to\mathbb{R}$ is supermodular if
\begin{align*}
f(x\vee y)+f(x\wedge y)\ge f(x)+f(y),
\end{align*}
for $x,y$ in $\mathcal{P}$. Although both supermodularity and M\"obius convexity are naturally defined on lattices, the relationship between them is, in general, nontrivial. For instance, on a chain, M\"obius convexity coincides with the notion of convexity in finite difference calculus, whereas supermodularity becomes vacuous.

\section{Hockey-Stick functions}\label{sec:3.5}

\noindent We now introduce the family of test functions that will induce the domination relation used in the comparison of probability measures on $\mathcal P$. For $x,z$ in $\mathcal P$, define
\begin{equation}\label{eq:Phixhockeydef}
\Phi_x^{\mathcal P}(z)
:=
\sum_{y\in\mathcal P}
\mathbbm{1}_{\{\,x\le y\le z\,\}}.
\end{equation}
Thus, $\Phi_x^{\mathcal P}(z)$ counts the number of elements $y$ such that $x\le y\le z$. This definition is the poset counterpart of the usual hockey-stick function on the real line, since
\begin{align*}
(z-x)_+
=
\int_{\mathbb R}\mathbbm{1}_{\{\,x\le y\le z\,\}}\,dy.
\end{align*}
In convex analysis, optimization, machine learning, and stochastic dominance theory, these real-line hockey-stick functions are also known as hinge functions \cite{boyd2004convex,niculescu2006convex,shaked2007stochastic}. We keep the term hockey-stick function in order to emphasize the connection with the domination relation developed below.\\

\noindent The relevance of hockey-stick functions comes from their role as elementary building blocks for convex functions. In particular, under the usual representation of one-dimensional convex functions, and under the corresponding finiteness assumptions, one may write
\begin{equation} \label{cr}
g(x)
=
\alpha+\beta x+\int_{\mathbb R}(x-\gamma)_+\,\mu(d\gamma),
\end{equation}
for suitable constants $\alpha,\beta$ in $\mathbb R$ and a nonnegative measure $\mu$. This representation is closely related to the classical characterization of convex order: for integrable random variables $X$ and $Y$, the property
\begin{equation}\label{ce1}
    \mathbb{E}[g(X)] \le \mathbb{E}[g(Y)],
\end{equation}
for all convex functions $g$, is equivalent to
\begin{equation} \label{ce2}
    \mathbb{E}[(X-\gamma)_+] \le \mathbb{E}[(Y-\gamma)_+] 
    \quad \text{for all } \gamma\in\mathbb R,
    \quad \text{and} \quad
    \mathbb{E}[X] = \mathbb{E}[Y].
\end{equation}
This observation motivates the use of hockey-stick functions as poset analogs of the real-line hockey-stick functions, and as a first family of tests from which domination relations can be built.

\subsection{Basic properties}
In this subsection we present some of the basic properties of hockey-stick functions, defined through \eqref{eq:Phixhockeydef}. First we observe that, by a direct computation,
\begin{align*}
(\zeta * \zeta)(x,z)=|[x,z]|=\Phi_x^{\mathcal{P}}(z).
\end{align*}
Moreover, as in the case of the power functions, the hockey-stick functions are invariant under isomorphisms, in the sense that
\begin{align*}
\Phi_x^{\mathcal{P}}
  &= \Phi_{T(x)}^{\mathcal{Q}} \circ T.
\end{align*}
The next result collects some elementary properties of hockey-stick functions. In what follows, for $f \in \mathfrak{F}$ we denote
\begin{align*}
D^{2+}[f] := \{ x \in \mathcal{P} : D^2[f](x) > 0 \}.
\end{align*}

\begin{proposition} \label{p0}
\begin{enumerate}
\item[$i)$] For every $x$ in $\mathcal{P}$, the function $\Phi^{\mathcal{P}}_x$ is increasing and M\"obius convex. In particular,
\begin{align*}
\Phi_x^{\mathcal P}(z)=I^2[\delta_x](z),
\end{align*}
where $\delta_x(z):=\mathbbm 1_{\{z=x\}}$. Moreover, when $\mathcal{P}$ is a lattice, $\Phi_x^{\mathcal P}$ is supermodular.

\item[$ii)$] For any function $f$ in $\mathfrak{F}$, it follows that
\begin{align*}
f= \sum_ {x \in \mathcal{P}}D^2[f](x) \,\Phi_{x}^{\mathcal P}.
\end{align*}
In particular, every function $f \in \mathfrak{F}$ can be written as the difference of two nonnegative increasing M\"obius convex functions.
\end{enumerate}
\end{proposition}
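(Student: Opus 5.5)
The plan is to reduce everything to the identity $\Phi_x^{\mathcal P}=I^2[\delta_x]$ and to the fact that $D$ inverts $I$.

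\textbf{Step 1: the representation $\Phi_x^{\mathcal P}=I^2[\delta_x]$.} Compute $I[\delta_x](y)=\mathbbm 1_{\{x\le y\}}$. Applying $I$ once more gives
\begin{align*}
I^2[\delta_x](z)=\sum_{y\le z}\mathbbm 1_{\{x\le y\}}=\Phi_x^{\mathcal P}(z).
\end{align*}
Equivalently, $\delta_x*\zeta*\zeta$ evaluated at $z$ equals $(\zeta*\zeta)(x,z)$.

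\textbf{Step 2: monotonicity and M\"obius convexity.} From Step 1, $D^2[\Phi_x]=\delta_x\ge 0$. Lemma~\ref{l1}~$iii)$ then shows at once that $\Phi_x$ is M\"obius convex, increasing and nonnegative. A direct check is also available: $D[\Phi_x]=\mathbbm 1_{[x,\infty)}$, which is increasing because $[x,\infty)$ is an up-set.

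\textbf{Step 3: supermodularity on a lattice.} Write $\Phi_x(z)=\sum_y \mathbbm 1_{\{x\le y\}}\mathbbm 1_{\{y\le z\}}$. It suffices to show that, for each fixed $y\ge x$, the map $z\mapsto \mathbbm 1_{\{y\le z\}}$ is supermodular. Its values at $x\vee z'$, $x\wedge z'$, $x$, $z'$ (with generic arguments $a,b$) are handled by cases:
\begin{enumerate}
\item[$\bullet$] If $y\le a$ and $y\le b$, then $y\le a\wedge b$ and $y\le a\vee b$, so both sides equal $2$.
\item[$\bullet$] If exactly one of $y\le a$, $y\le b$ holds, the left side is at least $1$ through $a\vee b$.
\item[$\bullet$] If neither holds, the right side is $0$.
\end{enumerate}
Summing over $y\ge x$ gives supermodularity of $\Phi_x$. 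This case check is the only point needing care, and it is short.

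\textbf{Step 4: the expansion in part $ii)$.} Set $g=D^2[f]$. Since $I$ and $D$ are mutually inverse, $f=I^2[g]$. Writing $g=\sum_x g(x)\delta_x$ and using linearity of $I^2$ together with Step 1 yields
\begin{align*}
f=\sum_x D^2[f](x)\,\Phi_x^{\mathcal P}.
\end{align*}
For the decomposition, split the coefficients into $(D^2[f](x))_+$ and $(D^2[f](x))_-$. This gives
\begin{align*}
f=\sum_x (D^2[f](x))_+\Phi_x-\sum_x (D^2[f](x))_-\Phi_x.
\end{align*}
Each sum is a nonnegative combination of nonnegative, increasing, M\"obius convex functions. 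Such combinations stay in that class: $D$ is linear, and the defining inequalities are preserved under nonnegative combinations.

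I expect no step to be a real obstacle. The only mild subtlety is Step 3, and it is settled by the indicator reduction above.
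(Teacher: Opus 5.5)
Your proposal is correct and follows essentially the same route as the paper: $\Phi_x^{\mathcal P}=I^2[\delta_x]$, then Lemma~\ref{l1}~$iii)$ for monotonicity and M\"obius convexity, and for part $ii)$ the linear expansion of $f=I^2[D^2[f]]$ with the coefficients split into positive and negative parts (the paper reaches the same identity by checking $D[F]=D[f]$). Your indicator-by-indicator check of supermodularity is the pointwise form of the paper's inclusion--exclusion argument based on $[x,z_1]\cap[x,z_2]=[x,z_1\wedge z_2]$ and $[x,z_1]\cup[x,z_2]\subseteq[x,z_1\vee z_2]$.
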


\begin{proof}
Given $x$ in $\mathcal{P}$, by definition, it is straightforward to see that
\begin{align*}
\Phi^{\mathcal{P}}_x(z)=I^2[\delta_x](z),
\end{align*}
where $\delta_x(z):=\mathbbm 1_{\{z=x\}}$.
Thus,
\begin{align*}
D^2[\Phi^{\mathcal{P}}_x](z)=\delta_x(z)\geq 0.
\end{align*}
By Lemma~\ref{l1}~$iii)$, it is M\"obius convex and increasing.

\noindent On the other hand, when $\mathcal{P}$ is a lattice, given $x,z_1,z_2$ in $\mathcal{P}$, we have
\begin{align*}
[x,z_1] \cap [x,z_2]=[x, z_1 \wedge z_2],
\end{align*}
and
\begin{align*}
[x,z_1] \cup [x,z_2]\subseteq [x, z_1 \vee z_2].
\end{align*}
By the inclusion-exclusion principle,
\begin{align*}
|[x,z_1]|+|[x,z_2]|
&=
|[x,z_1]\cup [x,z_2]|+|[x,z_1]\cap [x,z_2]|  \\
&\le
|[x,z_1\vee z_2]|+|[x,z_1\wedge z_2]|.
\end{align*}
Therefore,
\begin{align*}
\Phi_x^{\mathcal P}(z_1)+\Phi_x^{\mathcal P}(z_2)
\le
\Phi_x^{\mathcal P}(z_1\vee z_2)
+
\Phi_x^{\mathcal P}(z_1\wedge z_2),
\end{align*}
which proves supermodularity.\\

\noindent For the second statement, first observe that 
\begin{align*}
\Phi^{\mathcal{P}}_x(z)=I[g_x](z),
\end{align*}
where $g_x(y):=\mathbbm{1}_{\{\, x \leq y\}}$. Now, if we define
\begin{align*}
F :=  \sum_ {x \in \mathcal{P}}D^2[f](x) \,\Phi_{x}^{\mathcal P},
\end{align*}
then
\begin{align*}
D[F](z)
&= \sum_{x \in \mathcal{P}}  D^2[f](x) \, D[\Phi_x^{\mathcal P}](z)
= \sum_{x \in \mathcal{P}}  D^2[f](x) \, \mathbbm{1}_{\{x \le z\}}  
= I[D^2[f]](z).
\end{align*}
Consequently, $D[F] = D[f]$ and hence since $D$ is invertible by M\"obius inversion, it follows that $F = f$. Thus, we have
\begin{equation}\label{ec1}
f(z)
=
\sum_{x \in D^{2+}[f]} D^2[f](x) \, \Phi_x^{\mathcal P}(z)
-
\sum_{x \in D^{2+}[-f]} D^2[-f](x) \, \Phi_x^{\mathcal P}(z).
\end{equation}
Since each $\Phi_x^{\mathcal P}$ is nonnegative, increasing and M\"obius convex, the proof is done.
\end{proof}

\noindent
The second statement of the previous proposition has, in some sense, the same flavor as \eqref{cr}, and suggests that, at least from a theoretical perspective, many properties of functions $f\in \mathfrak{F}$ rely on an adequate understanding of hockey-stick functions and the second M\"obius derivative. In fact, it provides an alternative proof of Lemma~\ref{l1}~$iii)$. Consequently, the preceding results naturally position hockey-stick functions as a fundamental and meaningful class of functions, serving as basic building blocks when combined with the M\"obius derivative operator.

%========================================================================
\subsection{Hockey-Stick domination} \label{subhs}
We now use these functions to define a comparison relation.

\begin{definition}
We say that $\mu$ in $\mathcal{M}(\mathcal{P})$ is dominated by $\nu$ in $\mathcal{M}(\mathcal{P})$ in the hockey-stick order, written $\mu \preceq_{HS} \nu$, if for every $x$ in $\mathcal{P}$ it holds that
\begin{align*}
\langle \nu-\mu,\Phi_x^{\mathcal{P}}\rangle \ge 0.
\end{align*}
\end{definition}

\begin{remark}
Besides the plain definition, the ordering $\preceq_{HS}$ admits an interpretation in terms of the signed measure $\nu-\mu$. The tail of this measure will be denoted by
\begin{align*}
S(z):=\sum_{y\ge z} (\nu-\mu)(y),
\end{align*}
which codifies the mass discrepancy between the underlying measures, accumulated over intervals of the form $[z,\infty)$. In terms of this notation, the condition of hockey-stick domination reads
\begin{align*}
\sum_{z\ge x} S(z)\ge 0,
\end{align*}
for all $x$ in $\mathcal{P}$. This inequality says that negative contributions in upper intervals must be compensated by positive mass in the corresponding upper tails.
\end{remark}

\noindent The condition of hockey-stick domination is also invariant under isomorphism, in the sense that $\mu \preceq_{HS} \nu$ if and only if $T_{\#}\mu \preceq_{HS} T_{\#}\nu$.\\

\noindent We shall also use a localized version of the same notation. If $A\subseteq\mathcal P$, we write $\mu\preceq_{HS}\nu$ on $A$ when
\begin{align*}
\langle \nu-\mu,\Phi_x^{\mathcal P}\rangle \ge 0
\end{align*}
for every $x$ in $A$.\\

\noindent Beyond its role in bounding distances between probability measures, hockey-stick domination can also be used to extract additional information. The next result shows that the positivity of a function under the action of $\nu - \mu$ can be deduced from hockey-stick domination on the part of the poset determined by the sign of the second M\"obius derivative of the function under consideration.

\begin{corollary} \label{cc1}
Let $\mu, \nu \in \mathcal{M}(\mathcal{P})$ and $f \in \mathfrak{F}$. If the following conditions hold:
\begin{align*}
\mu \preceq_{HS} \nu 
\quad \text{on } D^{2+}[f],
\end{align*}
and
\begin{align*}
\nu \preceq_{HS} \mu 
\quad \text{on } D^{2+}[-f],
\end{align*}
then it follows that
\begin{align*}
\langle \nu-\mu,f\rangle \ge 0.
\end{align*}
\end{corollary}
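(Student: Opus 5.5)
The plan is to expand $f$ through Proposition~\ref{p0}~$ii)$ and then pair the resulting expansion with $\nu-\mu$ term by term. By that proposition, in the form \eqref{ec1}, we have
\begin{align*}
f=\sum_{x\in D^{2+}[f]} D^2[f](x)\,\Phi_x^{\mathcal P}-\sum_{x\in D^{2+}[-f]} D^2[-f](x)\,\Phi_x^{\mathcal P}.
\end{align*}
This holds because the points where $D^2[f](x)=0$ contribute nothing. Also, $D^2$ is linear, so $D^2[-f]=-D^2[f]$. Hence $D^{2+}[f]$ and $D^{2+}[-f]$ partition the support of $D^2[f]$ according to its sign. Every coefficient $D^2[f](x)$ in the first sum and every coefficient $D^2[-f](x)$ in the second sum is strictly positive.

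Next we use that the pairing $\langle \nu-\mu,\cdot\rangle$ is linear. This gives
\begin{align*}
\langle \nu-\mu,f\rangle=\sum_{x\in D^{2+}[f]} D^2[f](x)\,\langle \nu-\mu,\Phi_x^{\mathcal P}\rangle+\sum_{x\in D^{2+}[-f]} D^2[-f](x)\,\langle \mu-\nu,\Phi_x^{\mathcal P}\rangle .
\end{align*}
We now apply the localized definition of hockey-stick domination to each sum.
\begin{itemize}
\item The hypothesis $\mu\preceq_{HS}\nu$ on $D^{2+}[f]$ gives $\langle \nu-\mu,\Phi_x^{\mathcal P}\rangle\ge 0$ for each $x$ in the first sum.
\item The hypothesis $\nu\preceq_{HS}\mu$ on $D^{2+}[-f]$ gives $\langle \mu-\nu,\Phi_x^{\mathcal P}\rangle\ge0$ for each $x$ in the second sum.
\end{itemize}
Every term is then a product of two nonnegative numbers, so the total is nonnegative. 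This proves the claim.

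There is no real obstacle here. The only care needed is bookkeeping: check that \eqref{ec1} is exactly the decomposition of $f$ with the zero terms dropped, and get the sign flip in the second sum right.
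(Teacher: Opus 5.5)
Your proof is correct and is exactly the argument the paper intends: the paper only remarks that the corollary is a direct consequence of Proposition~\ref{p0}~$ii)$. Your write-up supplies that omitted detail, namely pairing the decomposition \eqref{ec1} with $\nu-\mu$ and using the two localized domination hypotheses to sign each term.
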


\noindent The proof is a direct consequence of Proposition~\ref{p0}~$ii)$.%==============================================================

\section{Constructing Hockey-Stick Domination}\label{eq:exhsdom}

\noindent
In this section, we develop methods for constructing hockey-stick domination. Our goal is to produce explicit dominated pairs on finite posets, beginning with elementary examples and then propagating the domination relation through standard poset operations. We start with finite chains, where the construction is inherited from classical convex domination, and then show that hockey-stick domination is preserved under direct products, disjoint unions, ordinal sums, and suitable restrictions to order ideals.

\subsection{Chains}\label{eq:secchains}
Our first basic source of hockey-stick domination comes from classical convex domination on the real line. To arrange the setting adequately, let $X=\{X_k\ ;\ k\geq 1\}$ be a martingale with respect to a filtration $\mathcal{F}$, taking values in $C_n$. A fundamental example is given by a simple random walk stopped at a suitable stopping time. Let $p<q$ be integers, and let $\mu$ and $\nu$ be the probability distributions of $X_p$ and $X_q$, respectively. By the conditional Jensen inequality, for every $x$ in $\mathbb R$,
\begin{align*}
\E[(X_q-x)_{+}]
&=
\E\big[\E[(X_q-x)_{+}\mid \mathcal{F}_p]\big]  \\
&\geq
\E\big[(\E[X_q\mid \mathcal{F}_p]-x)_{+}\big]
=
\E[(X_p-x)_{+}].
\end{align*}
Using the fact that the restrictions of the functions $y\mapsto (y-x)_{+}$ to $C_n$ are related to the hockey-stick functions on $C_n$, we conclude that
\[
\mu\preceq_{HS}\nu.
\]
Since $C_n$ is isomorphic to any chain of size $n$, this yields a canonical construction of hockey-stick domination on finite chains.\\

\noindent
The previous example, while illustrative, relies on the particularly simple structure of chains. Nevertheless, it provides a useful building block for generating domination relations on richer posets through suitable binary operations. This perspective is developed in the following sections.

\subsection{Direct products} \label{sub5.2}
The first mechanism for constructing new hockey-stick dominations from old ones is through direct products. Let $(\mathcal{P},\leq_{\mathcal{P}})$ and $(\mathcal{Q},\leq_{\mathcal{Q}})$ be two posets. The direct product of $\mathcal{P}$ and $\mathcal{Q}$ is the poset
\[
\mathcal{P} \times \mathcal{Q}
:=
\{(x,y)\mid x\in \mathcal{P},\; y\in \mathcal{Q}\},
\]
endowed with the partial order defined by
\[
(x_1,y_1)\leq (x_2,y_2)
\quad\text{if and only if}\quad
x_1\leq_{\mathcal{P}} x_2
\ \text{and}\
y_1\leq_{\mathcal{Q}} y_2.
\]
The product of higher order is defined inductively. The following result establishes how to obtain hockey-stick domination in product posets from individual dominations in each component.

\begin{proposition}\label{p1}
For $i=1,\dots,n$, let $ (\mathcal{P}_i,\leq_{i})$ be finite posets and let
$\mu_i,\nu_i$ be elements of $\mathcal M_1(\mathcal P_i)$ satisfying $\mu_i\preceq_{HS}\nu_i$.
On the product poset $\mathcal P:=\prod_{i=1}^n\mathcal P_i$ endowed with its product order $\leq_{\mathcal{P}}$, we consider the measures
\[
\mu:=\otimes_{i=1}^n \mu_i,
\qquad
\nu:=\otimes_{i=1}^n \nu_i.
\]
Then $\mu\preceq_{HS}\nu$ on $\mathcal P$.
\end{proposition}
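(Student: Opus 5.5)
The plan is to exploit the product structure of both the hockey-stick functions and the measures, and then telescope. First I would record the factorization of intervals in a product poset. For $x=(x_1,\dots,x_n)$ and $z=(z_1,\dots,z_n)$ in $\mathcal P$, an element $y=(y_1,\dots,y_n)$ satisfies $x\le y\le z$ exactly when $x_i\le_i y_i\le_i z_i$ for every $i$. Hence $[x,z]=\prod_i [x_i,z_i]$, and
\begin{align*}
\Phi_x^{\mathcal P}(z)=\prod_{i=1}^n \Phi_{x_i}^{\mathcal P_i}(z_i),
\end{align*}
where an empty interval contributes a factor $0$, consistent with the definition. Since $\mu$ and $\nu$ are product measures, Fubini then gives
\begin{align*}
\langle \mu,\Phi_x^{\mathcal P}\rangle=\prod_{i=1}^n a_i,\qquad \langle \nu,\Phi_x^{\mathcal P}\rangle=\prod_{i=1}^n b_i,
\end{align*}
with $a_i:=\langle \mu_i,\Phi_{x_i}^{\mathcal P_i}\rangle$ and $b_i:=\langle \nu_i,\Phi_{x_i}^{\mathcal P_i}\rangle$.

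Second, I would use the hypotheses to order these factors. Each $\Phi_{x_i}^{\mathcal P_i}$ is nonnegative and $\mu_i$ is a probability measure, so $a_i\ge 0$. The assumption $\mu_i\preceq_{HS}\nu_i$ gives $b_i\ge a_i\ge 0$. The problem therefore reduces to the elementary inequality $\prod_i b_i\ge\prod_i a_i$ for $0\le a_i\le b_i$. To make this transparent I would write the telescoping identity
\begin{align*}
\prod_{i=1}^n b_i-\prod_{i=1}^n a_i=\sum_{k=1}^n \Big(\prod_{i<k} a_i\Big)(b_k-a_k)\Big(\prod_{i>k} b_i\Big),
\end{align*}
in which every term is nonnegative. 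This shows $\langle \nu-\mu,\Phi_x^{\mathcal P}\rangle\ge 0$ for every $x\in\mathcal P$, which is exactly $\mu\preceq_{HS}\nu$.

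There is no serious obstacle here. The only points needing care are the interval factorization in the product order, which I would check directly from the definition of $\le_{\mathcal P}$, and the nonnegativity of the $a_i$. That nonnegativity is where the assumption that the $\mu_i$ are probability (positive) measures enters; for signed measures the argument would fail. Alternatively, one could induct on $n$ using the two-factor case, since the product of higher order is defined inductively, but the telescoping sum handles all $n$ at once.
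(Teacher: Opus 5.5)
Your proof is correct and follows essentially the paper's argument. The paper factorizes $\Phi_x^{\mathcal P}$ in the same way and telescopes through the hybrid product measures $\eta^{(k)}$. This yields exactly the terms $\bigl(\prod_{i<k} b_i\bigr)(b_k-a_k)\bigl(\prod_{i>k} a_i\bigr)$, which is your identity with the roles of the $a_i$ and $b_i$ in the outer products swapped; telescoping the scalars after Fubini, as you do, is only a slightly more streamlined presentation of the same idea.
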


\begin{proof}
For notational convenience, we will write $\mu[y]$ and $\nu[y]$ to denote the evaluation of $\mu$ and $\nu$, respectively, over the singleton $\{y\}$. Fix an element $x=(x_1,\dots,x_n)$ in $\prod_{i=1}^n\mathcal P_i$. Our goal is to prove that 
\[
\langle \nu-\mu,\Phi_x^{\mathcal P}\rangle
\ge 0.
\]
Define, for each $k\in \{1,\dots,n\}$, the measures
\[
\eta^{(k)}:=\big(\otimes_{i=1}^k \nu_i\big)\otimes\big(\otimes_{i=k+1}^n \mu_i\big),
\]
so that $\eta^{(0)}=\mu$ and $\eta^{(n)}=\nu$. Then
\[
\nu-\mu=\sum_{k=1}^n\bigl(\eta^{(k)}-\eta^{(k-1)}\bigr),
\]
and therefore
\[
\langle \nu-\mu,\Phi_x^{\mathcal P}\rangle
=
\sum_{k=1}^n \langle \eta^{(k)}-\eta^{(k-1)},\Phi_x^{\mathcal P}\rangle.
\]
Fix $k$ in $\{1,\dots,n\}$, since $\eta^{(k)}-\eta^{(k-1)}$ only changes the $k$-th factor, we have that 
\[
\eta^{(k)}[z]-\eta^{(k-1)}[z]
=
\big(\nu_k[z_k]-\mu_k[z_k]\big)\,
\prod_{i<k}\nu_i[z_i]\,
\prod_{i>k}\mu_i[z_i].
\]
By elementary computations, we can show the factorization $\Phi_x^{\mathcal P}(z)=\prod_{i=1}^n \Phi_{x_i}^{\mathcal P_i}(z_i)$, which in combination with the previous observations yields
\begin{multline*}
\langle \eta^{(k)}-\eta^{(k-1)},\Phi_x^{\mathcal P}\rangle\\
\begin{aligned}
&=
\Bigg(\prod_{i<k}\sum_{z_i\in\mathcal P_i}\Phi_{x_i}^{\mathcal P_i}(z_i)\,\nu_i[z_i]\Bigg)
\Bigg(\prod_{i>k}\sum_{z_i\in\mathcal P_i}\Phi_{x_i}^{\mathcal P_i}(z_i)\,\mu_i[z_i]\Bigg)
\Bigg(\sum_{z_k\in\mathcal P_k}\Phi_{x_k}^{\mathcal P_k}(z_k)\,\big(\nu_k[z_k]-\mu_k[z_k]\big)\Bigg).
\end{aligned}
\end{multline*}
The first two factors in the product at the right are nonnegative, since $\Phi_{x_i}^{\mathcal P_i}\ge 0$ and $\mu_i,\nu_i$ are probability measures. Moreover, by the hypothesis $\mu_k\preceq_{HS}\nu_k$ on $\mathcal P_k$, we have for every $x_k$ in $\mathcal P_k$,
\[
\sum_{z_k\in\mathcal P_k}\Phi_{x_k}^{\mathcal P_k}(z_k)\,\big(\nu_k[z_k]-\mu_k[z_k]\big)\ge 0.
\]
This proves the result.
\end{proof}

\noindent An alternative mechanism for producing hockey-stick domination on product spaces is obtained by arranging products along a chain. This construction relies on the structure of the associated Hasse diagram: the diagram of $\mathcal{P}\times C_n$ consists of $n$ layered copies of the Hasse diagram of $\mathcal{P}$, with additional cover edges connecting $(x,i)$ to $(x,i+1)$ for $x$ in $\mathcal{P}$ and $i=1,\dots,n-1$. In this way, the product poset appears as a vertical stacking of $\mathcal{P}$, and domination on $\mathcal{P}\times C_n$ is obtained by propagating the inequalities along the chain direction. The following proposition formalizes this construction.

\begin{proposition}\label{p1c}
Let $(\mathcal{P},\leq_{\mathcal{P}})$ be a finite poset and let 
$\mu_i,\nu_i$ be elements in $\mathcal M_1(\mathcal P)$ satisfying 
$\mu_i \preceq_{HS} \nu_i$ on $\mathcal P$ for each $i=1,\dots,n$.
Let $\alpha_1,\dots,\alpha_n\ge0$ satisfy $\sum_{i=1}^n\alpha_i=1$.
Define probability measures $\mu,\nu $ on $ \mathcal{P} \times C_n $ by
\[
\mu[(x,i)] := \alpha_i\mu_i[x]
\quad\text{and}\quad
\nu[(x,i)] := \alpha_i\nu_i[x].
\]
Then $\mu \preceq_{HS} \nu$ on $\mathcal{P}\times C_n$.
\end{proposition}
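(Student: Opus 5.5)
The plan is to reduce the claim to a single test function and use the factorization of hockey-stick functions over a direct product, which was already used in the proof of Proposition~\ref{p1}. Fix an element $(x,j)$ of $\mathcal P\times C_n$, where $C_n=\{1<2<\dots<n\}$. The goal is to show
\begin{align*}
\langle \nu-\mu,\Phi^{\mathcal P\times C_n}_{(x,j)}\rangle\ \ge 0 .
\end{align*}

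The first step is to establish the factorization $\Phi^{\mathcal P\times C_n}_{(x,j)}(z,i)=\Phi^{\mathcal P}_x(z)\,\Phi^{C_n}_j(i)$. This holds because the interval $[(x,j),(z,i)]$ in the product order equals $[x,z]\times[j,i]$, so its cardinality is the product of the two cardinalities. If either interval is empty, both sides vanish, so no case analysis is needed. On the chain, $\Phi^{C_n}_j(i)=(i-j+1)_+$, which is nonnegative. In fact only its nonnegativity will be used.

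The second step is to plug in the definitions $\mu[(z,i)]=\alpha_i\mu_i[z]$ and $\nu[(z,i)]=\alpha_i\nu_i[z]$. Summing first over $z\in\mathcal P$ and then over $i$ gives
\begin{align*}
\langle \nu-\mu,\Phi^{\mathcal P\times C_n}_{(x,j)}\rangle
=\sum_{i=1}^n \alpha_i\,\Phi^{C_n}_j(i)\,\langle \nu_i-\mu_i,\Phi^{\mathcal P}_x\rangle .
\end{align*}
Each summand is a product of three factors. The weight $\alpha_i$ is nonnegative by assumption. The factor $\Phi^{C_n}_j(i)$ is nonnegative as a cardinality. The factor $\langle \nu_i-\mu_i,\Phi^{\mathcal P}_x\rangle$ is nonnegative by the hypothesis $\mu_i\preceq_{HS}\nu_i$ applied at the point $x$. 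Hence the sum is nonnegative. Since $(x,j)$ was arbitrary, this gives $\mu\preceq_{HS}\nu$.

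Finally, $\mu$ and $\nu$ are probability measures, since $\sum_{i,z}\alpha_i\mu_i[z]=\sum_i\alpha_i=1$, and likewise for $\nu$. The only point requiring care is the interval factorization in the product order. It is elementary, and it is the same fact invoked in Proposition~\ref{p1}, so I do not expect a genuine obstacle.
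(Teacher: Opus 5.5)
Your proposal is correct and matches the paper's proof. Both factor $\Phi^{\mathcal P\times C_n}_{(x,j)}(z,i)=\Phi^{\mathcal P}_x(z)\,\Phi^{C_n}_j(i)$ and reduce the test to $\sum_i \alpha_i\,\Phi^{C_n}_j(i)\,\langle\nu_i-\mu_i,\Phi^{\mathcal P}_x\rangle$, which is a sum of nonnegative terms.
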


\begin{proof}
We must show that for every $(x,i) $ in $ \mathcal{P}\times C_n$,
\[
\langle \nu-\mu, \Phi_{(x,i)}^{\mathcal{P}\times C_n} \rangle
= \sum_{(z,j) \in \mathcal{P}\times C_n } \Phi_{(x,i)}^{\mathcal{P}\times C_n} (z,j)\bigl(\nu[(z,j)]-\mu[(z,j)]\bigr) \ge 0.
\]
By the definition of $\mu$ and $\nu$, this can be written as
\[
\sum_{(z,j) \in \mathcal{P}\times C_n } 
\Phi_{(x,i)}^{\mathcal{P}\times C_n}(z,j)\,
\bigl(\nu[(z,j)]-\mu[(z,j)]\bigr)
=
\sum_{j=1}^n \alpha_j \sum_{z \in \mathcal{P}} 
\Phi_{(x,i)}^{\mathcal{P}\times C_n}(z,j)\,
\bigl(\nu_j[z]-\mu_j[z]\bigr).
\]
Now, by definition,
\[
\begin{aligned}
\Phi_{(x,i)}^{\mathcal{P}\times C_n}(z,j)
&=
\sum_{(y,k)\in \mathcal{P}\times C_n}
\mathbbm{1}_{\{x\le_{\mathcal{P}} y \le_{\mathcal{P}} z\}}
\mathbbm{1}_{\{i\le_{C_n} k \le_{C_n} j\}} \\[0.3em]
&=
\Phi_x^{\mathcal{P}}(z)\,\Phi_i^{C_n}(j).
\end{aligned}
\]
As a consequence, for every $(x,i) $ in $ \mathcal{P}\times C_n$,
\[
\begin{aligned}
\sum_{j=1}^n \alpha_j \sum_{z \in \mathcal{P}} 
\Phi_{(x,i)}^{\mathcal{P}\times C_n}(z,j)\,
\bigl(\nu_j[z]-\mu_j[z]\bigr)
&=
\sum_{j=1}^n \alpha_j \sum_{z \in \mathcal{P}} 
\Phi_i^{C_n}(j)\Phi_x^{\mathcal{P}}(z)\,
\bigl(\nu_j[z]-\mu_j[z]\bigr) \\[0.3em]
&=
\sum_{j=1}^n \alpha_j\Phi_i^{C_n}(j)  
\sum_{z \in \mathcal{P}} \Phi_x^{\mathcal{P}}(z)\,
\bigl(\nu_j[z]-\mu_j[z]\bigr).
\end{aligned}
\]
By hypothesis, $\mu_j \preceq_{HS} \nu_j$ on $\mathcal P$ for each $j=1,\dots,n$; that is, for every $x$ in $\mathcal{P}$,
\[
\sum_{z\in\mathcal{P}} \Phi_x^{\mathcal{P}}(z)\,
\bigl(\nu_j[z]-\mu_j[z]\bigr) \ge 0.
\]
Together with the facts that $\alpha_j\ge0$ and $\Phi_i^{C_n}(j)\ge0$, this proves the result.
\end{proof}

\subsubsection{The Boolean poset}
The combination of chains and direct products of posets naturally leads to the formulation of posets possessing hockey-stick dominated measures. In particular, for every collection of probability measures $(\mu_i,\nu_i)_{i=1}^n$ on $C_2$, Proposition~\ref{p1} shows that if $\mu_i \preceq_{HS} \nu_i$ for $i=1,\dots,n$, then the product measures $\mu=\mu_1\otimes\cdots\otimes\mu_n$ and $\nu=\nu_1\otimes\cdots\otimes\nu_n$ satisfy $\mu \preceq_{HS} \nu$ on $C_2^n$. Since the Boolean poset of rank $n$ is the $n$-fold product $C_2^n$, this yields a canonical construction of hockey-stick dominated measures on $\mathrm{Bool}_n$. Furthermore, for any fixed $1 \le i \le n-1$ we have
\begin{align*}
\mathrm{Bool}_n \cong \mathrm{Bool}_i \times C_2^{\,n-i},
\end{align*}
and therefore Propositions~\ref{p1} and~\ref{p1c} provide different ways to construct hockey-stick domination on $\mathrm{Bool}_n$.

\subsubsection{Rectangular lattice}
\noindent In this setting, Propositions~\ref{p1} and~\ref{p1c} reduce the construction of hockey-stick domination on \(C_{n_1}\times C_{n_2}\) to the corresponding construction on chains. We therefore record explicitly the chain inequalities. Consider measures \(\mu,\nu\) in $\mathcal M(C_m)$ satisfying \(\mu\preceq_{HS}\nu\). By definition, this condition requires that for every \(i=1,\dots,m\),
\begin{align*}
\sum_{j\in C_m}\Phi_i^{C_m}(j)\,
\bigl(\nu[j]-\mu[j]\bigr)\ge0.
\end{align*}
Recall that
\begin{align*}
\Phi_i^{C_m}(j)=
\begin{cases}
0, & i>j,\\[6pt]
j-i+1, & i\le j.
\end{cases}
\end{align*}
Therefore, for each fixed \(i\),
\begin{align*}
\sum_{j\in C_m}\Phi_i^{C_m}(j)\,
\bigl(\nu[j]-\mu[j]\bigr)
&=
\sum_{j=i}^{m}\Phi_i^{C_m}(j)\,
\bigl(\nu[j]-\mu[j]\bigr)  =
\sum_{l=0}^{m-i}(l+1)\,
\bigl(\nu[i+l]-\mu[i+l]\bigr).
\end{align*}
This reformulation shows that the problem reduces to finding parameters 
\(a_j,b_j \) in $\mathbb{R}$ with $j=1,\dots,m$, such that 
\begin{equation}\label{e1}
\sum_{l=0}^{m-i}(l+1)\,
\bigl(b_{i+l}-a_{i+l}\bigr)\ge0 \quad \text{for} \quad  i=1,\dots,m.
\end{equation}
Expression \eqref{e1} forms a triangular system of linear inequalities in the variables \(d_j:=b_j-a_j\), in the sense that the constraint indexed by \(i\) involves only the variables \(d_j\) with \(j\ge i\). In particular,
\begin{align*}
d_m\ge0,\qquad
d_{m-1}+2d_m\ge0,\qquad
d_{m-2}+2d_{m-1}+3d_m\ge0,
\end{align*}
and so on. Hence the feasible region for the differences \(d_j\) may be constructed inductively starting from the top index \(j=m\) and proceeding downward. Consequently, every pair of probability measures \((\mu,\nu)\) exhibiting hockey-stick domination on \(C_m\) is described by parameters \(a_j,b_j\ge0\), for \(j=1,\dots,m\), satisfying
\begin{align*}
\sum_{j=1}^m a_j=\sum_{j=1}^m b_j=1,
\end{align*}
together with \eqref{e1}, via the identification
\begin{align*}
\mu[j]=a_j,
\qquad
\nu[j]=b_j.
\end{align*}

\subsection{Disjoint unions}
Another natural instance where hockey-stick dominations can be constructed from existing ones is through disjoint unions. 
Let $(\mathcal{P},\leq_{\mathcal{P}})$ and $(\mathcal{Q},\leq_{\mathcal{Q}})$ be two posets. The disjoint union of $\mathcal{P}$ and $\mathcal{Q}$ is the poset obtained as the disjoint union $\mathcal{P}\sqcup\mathcal{Q}$, endowed with the order 
\[
x \leq y
\quad\text{if and only if}\quad
\begin{cases}
x,y\in\mathcal{P}\ \text{and}\ x\leq_{\mathcal{P}} y,\\
x,y\in\mathcal{Q}\ \text{and}\ x\leq_{\mathcal{Q}} y.
\end{cases}
\]
In particular, elements belonging to different components are incomparable. 
Higher-order disjoint unions are defined inductively. As in the previous section, the disjoint union operation preserves the structure of hockey-stick domination, as stated in the following result.

\begin{proposition}\label{p:disjoint}
Let $(\mathcal{P}_i,\leq_i)$, $i=1,\dots,n$, be finite posets and $\mu_i\preceq_{HS}\nu_i$ on $\mathcal{P}_i$  for each $i$. 
Fix weights $\alpha_1,\dots,\alpha_n\ge0$ and define measures on the disjoint union
\[
\mathcal{P}:=\bigsqcup_{i=1}^n\mathcal{P}_i
\]
by
\[
\mu := \sum_{i=1}^n \alpha_i \mu_i,
\qquad
\nu := \sum_{i=1}^n \alpha_i \nu_i,
\]
where each $\mu_i,\nu_i$ is extended by zero outside $\mathcal{P}_i$. Then $\mu \preceq_{HS} \nu$.
\end{proposition}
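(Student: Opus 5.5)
The plan is to compute $\langle \nu-\mu,\Phi_x^{\mathcal P}\rangle$ directly, using the fact that in a disjoint union an element $x$ is comparable only with elements of its own component. Fix $x$ in $\mathcal P$; then $x\in\mathcal P_k$ for a unique $k$. First I will check that the hockey-stick function localizes to that component. If $z\notin\mathcal P_k$, no $y$ satisfies $x\le y\le z$, so $\Phi_x^{\mathcal P}(z)=0$. If $z\in\mathcal P_k$, any $y$ with $x\le y\le z$ must lie in $\mathcal P_k$, and the order there is $\le_k$. Hence
\begin{align*}
\Phi_x^{\mathcal P}(z)=\Phi_x^{\mathcal P_k}(z)\,\mathbbm 1_{\{z\in\mathcal P_k\}}.
\end{align*}

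Next I will expand the pairing by linearity:
\begin{align*}
\langle \nu-\mu,\Phi_x^{\mathcal P}\rangle=\sum_{i=1}^n\alpha_i\langle \nu_i-\mu_i,\Phi_x^{\mathcal P}\rangle .
\end{align*}
For $i\neq k$, the measure $\nu_i-\mu_i$ is supported on $\mathcal P_i$. By the localization above, $\Phi_x^{\mathcal P}$ vanishes there, so these terms are zero. For $i=k$, the pairing equals $\langle \nu_k-\mu_k,\Phi_x^{\mathcal P_k}\rangle$ computed inside $\mathcal P_k$. This is nonnegative by the hypothesis $\mu_k\preceq_{HS}\nu_k$. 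Multiplying by $\alpha_k\ge0$ gives $\langle\nu-\mu,\Phi_x^{\mathcal P}\rangle\ge0$, and since $x$ was arbitrary this is the claim.

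There is no real obstacle. The only step needing care is the localization identity, which follows directly from the definition of the disjoint union order. One side remark: $\mu$ and $\nu$ are probability measures only when $\sum_i\alpha_i=1$, but the definition of $\preceq_{HS}$ applies to any elements of $\mathcal M(\mathcal P)$, so the argument works without that normalization.
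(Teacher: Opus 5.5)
Your proof is correct and follows essentially the same route as the paper: you localize $\Phi_x^{\mathcal P}$ to the component $\mathcal P_k$ containing $x$, which gives $\langle \nu-\mu,\Phi_x^{\mathcal P}\rangle=\alpha_k\langle \nu_k-\mu_k,\Phi_x^{\mathcal P_k}\rangle\ge 0$. Your remark that the normalization $\sum_i\alpha_i=1$ is unnecessary is also consistent with the paper, which defines $\preceq_{HS}$ on all of $\mathcal M(\mathcal P)$.
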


\begin{proof}
Fix $x$ in $\mathcal{P}$. Then there exists a unique index $k$ such that $x$ belongs to $\mathcal{P}_k$. 
By definition of the disjoint union order, intervals do not mix components, and therefore
\[
\Phi_x^{\mathcal{P}}(z)
=
\begin{cases}
\Phi_x^{\mathcal{P}_k}(z), & z\in\mathcal{P}_k,\\
0, & z\notin\mathcal{P}_k.
\end{cases}
\]
Consequently,
\[
\langle \nu-\mu,\Phi_x^{\mathcal{P}}\rangle
=
\sum_{z\in\mathcal{P}_k}
\Phi_x^{\mathcal{P}_k}(z)\,
\bigl(\nu[z]-\mu[z]\bigr)=
\alpha_k
\sum_{z\in\mathcal{P}_k}
\Phi_x^{\mathcal{P}_k}(z)\,
\bigl(\nu_k[z]-\mu_k[z]\bigr).
\]
Since $\alpha_k\ge0$ and $\mu_k\preceq_{HS}\nu_k$ on $\mathcal{P}_k$, we have
\[
\sum_{z\in\mathcal{P}_k}
\Phi_x^{\mathcal{P}_k}(z)\,
\bigl(\nu_k[z]-\mu_k[z]\bigr)\ge0,
\]
which implies $\langle \nu-\mu,\Phi_x^{\mathcal{P}}\rangle \ge 0$, as required.
\end{proof}

\subsection{Ordinal sums}

A further mechanism through which new hockey-stick dominations may be constructed from existing ones is provided by ordinal sums. Let $(\mathcal{P},\leq_{\mathcal{P}})$ and $(\mathcal{Q},\leq_{\mathcal{Q}})$ be two posets. The ordinal sum of $\mathcal{P}$ and $\mathcal{Q}$ is the poset obtained from the disjoint union $\mathcal{P}\sqcup\mathcal{Q}$, denoted by $\mathcal{P}\oplus\mathcal{Q}$, and endowed with the partial order defined by
\begin{align*}
x \leq y\quad 
\quad\text{if and only if}\quad\quad 
\begin{cases}
x,y\in\mathcal{P}\ \text{and}\ x\leq_{\mathcal{P}} y,\\
x,y\in\mathcal{Q}\ \text{and}\ x\leq_{\mathcal{Q}} y,\\
x\in\mathcal{P},\ y\in\mathcal{Q}.
\end{cases}
\end{align*}
In particular, every element of $\mathcal{P}$ is declared to be below every element of $\mathcal{Q}$. Higher-order ordinal sums are defined inductively. As in the previous sections, this operation is compatible with hockey-stick domination.

\begin{proposition}\label{p:ordinal}
Let $(\mathcal P,\leq_{\mathcal P})$ and $(\mathcal Q,\leq_{\mathcal Q})$ be finite posets, and assume that $\mathcal Q$ has a least element $0_{\mathcal Q}$. Suppose that $\mu_{\mathcal P},\nu_{\mathcal P}\in\mathcal M_1(\mathcal P)$ and $\mu_{\mathcal Q},\nu_{\mathcal Q}\in\mathcal M_1(\mathcal Q)$ satisfy
\begin{align*}
\mu_{\mathcal P}\preceq_{HS}\nu_{\mathcal P}
\quad\text{on } \mathcal P,
\qquad
\mu_{\mathcal Q}\preceq_{HS}\nu_{\mathcal Q}
\quad\text{on } \mathcal Q.
\end{align*}
Let $\alpha,\beta\ge 0$ satisfy $\alpha+\beta=1$, and define probability measures on the ordinal sum $\mathcal P\oplus \mathcal Q$ by
\begin{align*}
\mu=\alpha\,\mu_{\mathcal P}+\beta\,\mu_{\mathcal Q},
\qquad
\nu=\alpha\,\nu_{\mathcal P}+\beta\,\nu_{\mathcal Q},
\end{align*}
where each component measure is extended by zero outside its original support. Then
\begin{align*}
\mu \preceq_{HS} \nu
\end{align*}
on $\mathcal P\oplus \mathcal Q$.
\end{proposition}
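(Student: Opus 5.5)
The plan is to verify the defining inequality $\langle \nu-\mu,\Phi_x^{\mathcal P\oplus\mathcal Q}\rangle\ge 0$ directly, for each $x\in\mathcal P\oplus\mathcal Q$. We first compute the hockey-stick function $\Phi_x^{\mathcal P\oplus\mathcal Q}$ explicitly in terms of the hockey-stick functions of the summands. By construction,
\begin{align*}
\nu-\mu=\alpha(\nu_{\mathcal P}-\mu_{\mathcal P})+\beta(\nu_{\mathcal Q}-\mu_{\mathcal Q}),
\end{align*}
so the pairing splits into a $\mathcal P$-part and a $\mathcal Q$-part. The argument has two cases, according to whether $x$ lies in $\mathcal Q$ or in $\mathcal P$.

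\emph{Case $x\in\mathcal Q$.} No element of $\mathcal P$ lies above $x$, so $\Phi_x^{\mathcal P\oplus\mathcal Q}(z)=0$ for $z\in\mathcal P$. For $z\in\mathcal Q$, the interval $[x,z]$ in $\mathcal P\oplus\mathcal Q$ is contained in $\mathcal Q$. Hence $\Phi_x^{\mathcal P\oplus\mathcal Q}(z)=\Phi_x^{\mathcal Q}(z)$, and the pairing equals
\begin{align*}
\beta\,\langle \nu_{\mathcal Q}-\mu_{\mathcal Q},\Phi_x^{\mathcal Q}\rangle,
\end{align*}
which is nonnegative by hypothesis.

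\emph{Case $x\in\mathcal P$.} For $z\in\mathcal P$, the interval $[x,z]$ stays in $\mathcal P$, so $\Phi_x^{\mathcal P\oplus\mathcal Q}(z)=\Phi_x^{\mathcal P}(z)$. For $z\in\mathcal Q$, the interval $[x,z]$ is the disjoint union of $\{y\in\mathcal P: y\ge x\}$ and $\{y\in\mathcal Q: y\le z\}$. This gives
\begin{align*}
\Phi_x^{\mathcal P\oplus\mathcal Q}(z)=c_x+|(-\infty,z]_{\mathcal Q}|,\qquad c_x:=|\{y\in\mathcal P: y\ge_{\mathcal P} x\}|.
\end{align*}
Because $0_{\mathcal Q}$ is the least element of $\mathcal Q$, we have $|(-\infty,z]_{\mathcal Q}|=|[0_{\mathcal Q},z]|=\Phi^{\mathcal Q}_{0_{\mathcal Q}}(z)$. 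Moreover, $\mu_{\mathcal Q}$ and $\nu_{\mathcal Q}$ are both probability measures, so $\nu_{\mathcal Q}-\mu_{\mathcal Q}$ has total mass zero and the constant $c_x$ integrates to zero. The pairing therefore equals
\begin{align*}
\alpha\,\langle \nu_{\mathcal P}-\mu_{\mathcal P},\Phi_x^{\mathcal P}\rangle+\beta\,\langle \nu_{\mathcal Q}-\mu_{\mathcal Q},\Phi^{\mathcal Q}_{0_{\mathcal Q}}\rangle .
\end{align*}
Both terms are nonnegative: the first by $\mu_{\mathcal P}\preceq_{HS}\nu_{\mathcal P}$, and the second by $\mu_{\mathcal Q}\preceq_{HS}\nu_{\mathcal Q}$ evaluated at $0_{\mathcal Q}$.

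The main obstacle is the second case with $z\in\mathcal Q$, where the hockey-stick function mixes the two summands. The key steps there are to recognise the decomposition into the constant $c_x$ plus a lower-interval count in $\mathcal Q$, and then to use the two hypotheses to control each piece. The equal total masses of $\mu_{\mathcal Q}$ and $\nu_{\mathcal Q}$ remove the constant. The existence of $0_{\mathcal Q}$ identifies the lower-interval count with a genuine hockey-stick function of $\mathcal Q$, to which the domination hypothesis applies.
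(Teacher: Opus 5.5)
Your proof is correct and follows the paper's argument. You use the same two-case split, the same decomposition $\Phi_x^{\mathcal P\oplus\mathcal Q}(z)=|[x,\infty)_{\mathcal P}|+|(-\infty,z]_{\mathcal Q}|$ for $x\in\mathcal P$, $z\in\mathcal Q$, cancel the constant via the equal total masses of $\mu_{\mathcal Q},\nu_{\mathcal Q}$, and identify the remaining term with $\Phi^{\mathcal Q}_{0_{\mathcal Q}}$.
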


\begin{proof}
Fix $x$ in $\mathcal{P}\oplus\mathcal{Q}$. We split the proof into two parts.

\medskip

\noindent
\textit{Case I.}
First we consider the case in which $x$ belongs to $\mathcal{P}$. In this instance, intervals extending above $x$ may lie either inside $\mathcal{P}$ or inside $\mathcal{Q}$, and therefore
\begin{align*}
\Phi_x^{\mathcal{P}\oplus\mathcal{Q}}(z)
=
\begin{cases}
\Phi_x^{\mathcal{P}}(z), & z\in\mathcal{P},\\[4pt]
|[x,\infty)_{\mathcal P}| + |(-\infty,z]_{\mathcal Q}|, & z\in\mathcal{Q}.
\end{cases}
\end{align*}
Consequently,
\begin{align*}
\langle \nu-\mu,\Phi_x^{\mathcal{P}\oplus\mathcal{Q}}\rangle
&=
\sum_{z\in\mathcal{P}}
\Phi_x^{\mathcal{P}}(z)\,
\bigl(\nu[z]-\mu[z]\bigr)
+
\sum_{z\in\mathcal{Q}}
\Big(|[x,\infty)_{\mathcal P}| + |(-\infty,z]_{\mathcal Q}|\Big)\,
\bigl(\nu[z]-\mu[z]\bigr)\\
&=
\alpha
\sum_{z\in\mathcal{P}}
\Phi_x^{\mathcal{P}}(z)\,
\bigl(\nu_{\mathcal{P}}[z]-\mu_{\mathcal{P}}[z]\bigr)
+
\beta\,|[x,\infty)_{\mathcal P}|
\sum_{z\in\mathcal{Q}}
\bigl(\nu_{\mathcal{Q}}[z]-\mu_{\mathcal{Q}}[z]\bigr)\\
&\qquad
+
\beta
\sum_{z\in\mathcal{Q}}
|(-\infty,z]_{\mathcal{Q}}|\,
\bigl(\nu_{\mathcal{Q}}[z]-\mu_{\mathcal{Q}}[z]\bigr).
\end{align*}
Since $\mu_{\mathcal Q}$ and $\nu_{\mathcal Q}$ are probability measures, the middle term is equal to zero. On the other hand, for every $z$ in $\mathcal Q$,
\begin{align*}
|(-\infty,z]_{\mathcal Q}|=|[0_{\mathcal Q},z]_{\mathcal Q}|=\Phi_{0_{\mathcal Q}}^{\mathcal Q}(z),
\end{align*}
and therefore, by $\mu_{\mathcal Q}\preceq_{HS}\nu_{\mathcal Q}$,
\begin{align*}
\sum_{z\in\mathcal{Q}}
|(-\infty,z]_{\mathcal{Q}}|\,
\bigl(\nu_{\mathcal{Q}}[z]-\mu_{\mathcal{Q}}[z]\bigr)
&=
\sum_{z\in\mathcal{Q}}
\Phi_{0_{\mathcal Q}}^{\mathcal Q}(z)\,
\bigl(\nu_{\mathcal{Q}}[z]-\mu_{\mathcal{Q}}[z]\bigr)\ge 0.
\end{align*}
Finally, the first term is nonnegative because $\mu_{\mathcal{P}}\preceq_{HS}\nu_{\mathcal{P}}$. Hence
\begin{align*}
\langle \nu-\mu,\Phi_x^{\mathcal{P}\oplus\mathcal{Q}}\rangle\ge 0.
\end{align*}

\medskip

\noindent
\textit{Case II.}
Consider now the case $x$ in $\mathcal{Q}$. For this instance, intervals remain entirely inside $\mathcal{Q}$, and therefore
\begin{align*}
\Phi_x^{\mathcal{P}\oplus\mathcal{Q}}(z)
=
\begin{cases}
\Phi_x^{\mathcal{Q}}(z), & z\in\mathcal{Q},\\
0, & z\in\mathcal{P}.
\end{cases}
\end{align*}
Hence
\begin{align*}
\langle \nu-\mu,\Phi_x^{\mathcal{P}\oplus\mathcal{Q}}\rangle
&=
\beta
\sum_{z\in\mathcal{Q}}
\Phi_x^{\mathcal{Q}}(z)\,
\bigl(\nu_{\mathcal{Q}}[z]-\mu_{\mathcal{Q}}[z]\bigr)\ge 0,
\end{align*}
since $\mu_{\mathcal{Q}}\preceq_{HS}\nu_{\mathcal{Q}}$. This completes the proof.
\end{proof}

\subsubsection{Trees}
The combination of chains with the operations of disjoint union and ordinal-type attachments yields posets with branching structures while preserving hockey-stick dominated probability measures. Starting from measures on finite chains, disjoint unions produce forest-type posets, whereas attaching a root below a forest generates rooted tree structures. Structurally, every finite rooted tree poset $T$ admits a recursive decomposition
\begin{align*}
T
\;\cong\;
\mathbf 1
\;\oplus\;
\Big(\bigsqcup_{i=1}^k T_i\Big),
\end{align*}
where $\mathbf 1$ denotes the root and $T_1,\dots,T_k$ are the rooted subtrees above it. Although the upper forest $\bigsqcup_{i=1}^k T_i$ need not have a least element, the proof of Proposition~\ref{p:ordinal} applies branch by branch, together with the disjoint-union construction. Thus, if dominated pairs are given on the subtrees and the corresponding branch weights are chosen consistently for the two measures, then the resulting measures on $T$ are again hockey-stick dominated. Consequently, trees obtained through iterative disjoint unions and root attachments inherit natural families of hockey-stick dominated probability measures assembled from their chain components.

\subsection{Ideal restrictions} \label{sub5.5}
Another mechanism consists in restricting a poset to a lower set. Let $(\mathcal{P},\leq_{\mathcal{P}})$ be a finite poset. A subset $I\subseteq \mathcal{P}$ is called an order ideal if
\begin{align*}
z\in I,\ \ y\leq_{\mathcal P} z
\end{align*}
imply that  $y$ belongs to $ I$. We write $\mathcal P_{\mid I}$ for the induced subposet on $I$. Given a measure $\mu$ in $\mathcal M(\mathcal P)$, we consider its restriction to $I$, defined by
\begin{align*}
\mu_{\mid I}[z]:=\mu[z]\mathbbm 1_{\{z\in I\}},
\end{align*}
for $z$ in $\mathcal P$. Whenever $\mu$ is a probability measure and $\mu[I]>0$, we also consider the conditional restriction
\begin{align*}
\mu^{I}[z]
:=
\frac{\mu[z]}{\mu[I]}\mathbbm 1_{\{z\in I\}},
\end{align*}
for $z$ in $\mathcal P$, which defines an element of $\mathcal M_1(I)$, identified with a probability measure on $\mathcal P$ supported on $I$. A basic observation is that order ideals are stable under intervals: if $x,z$ belong to $I$ and $x\le_{\mathcal P} z$, then
\begin{align*}
[x,z]_{\mathcal P}\subseteq I.
\end{align*}
In particular, for every $x,z$ in $I$, we have the identity
\begin{equation}\label{eq:phi-ideal}
\Phi_x^{\mathcal P_{\mid I}}(z)=\Phi_x^{\mathcal P}(z).
\end{equation}
Namely, the hockey-stick functions of the restricted poset coincide with the original ones, as long as both arguments remain inside $I$. Nevertheless, hockey-stick domination does not, in general, pass to restrictions, since the inequalities on $\mathcal P$ may also involve mass outside $I$. The next proposition gives a simple sufficient condition under which domination is preserved.

\begin{proposition}\label{p:ideal}
Let $(\mathcal P,\le_{\mathcal P})$ be a finite poset, let $I\subseteq\mathcal P$ be an order ideal, and let $\mu,\nu\in\mathcal M_1(\mathcal P)$.
Assume that $\mu\preceq_{HS}\nu$ on $\mathcal P$ and 
\begin{align*}
\mu[z]=\nu[z],\qquad z\in \mathcal P\setminus I.
\end{align*}
Then $\mu_{\mid I}\preceq_{HS}\nu_{\mid I}$ on $\mathcal P_{\mid I}$. In particular, if $\mu[I]=\nu[I]>0$, then
$\mu^{I}\preceq_{HS}\nu^{I}$ on $\mathcal P_{\mid I}$.
\end{proposition}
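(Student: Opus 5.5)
The plan is to fix $x\in I$ and compare $\langle \nu_{\mid I}-\mu_{\mid I},\Phi_x^{\mathcal P_{\mid I}}\rangle$ with the full pairing $\langle \nu-\mu,\Phi_x^{\mathcal P}\rangle$, which is nonnegative by hypothesis. Since both arguments lie in $I$, identity \eqref{eq:phi-ideal} gives
\begin{align*}
\langle \nu_{\mid I}-\mu_{\mid I},\Phi_x^{\mathcal P_{\mid I}}\rangle
=\sum_{z\in I}\Phi_x^{\mathcal P}(z)\bigl(\nu[z]-\mu[z]\bigr).
\end{align*}
We then split the full pairing over $I$ and $\mathcal P\setminus I$:
\begin{align*}
\langle \nu-\mu,\Phi_x^{\mathcal P}\rangle
=\sum_{z\in I}\Phi_x^{\mathcal P}(z)\bigl(\nu[z]-\mu[z]\bigr)
+\sum_{z\in \mathcal P\setminus I}\Phi_x^{\mathcal P}(z)\bigl(\nu[z]-\mu[z]\bigr).
\end{align*}
By the assumption $\mu[z]=\nu[z]$ for $z\notin I$, the second sum vanishes termwise. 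Hence the restricted pairing equals the full pairing and is therefore $\ge 0$. Since every element of $\mathcal P_{\mid I}$ is such an $x$, this proves $\mu_{\mid I}\preceq_{HS}\nu_{\mid I}$ on $\mathcal P_{\mid I}$. The only point to watch is that only test points $x\in I$ are required, because the restricted poset has no other elements, and \eqref{eq:phi-ideal} applies exactly to pairs inside $I$.

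For the conditional statement, suppose $\mu[I]=\nu[I]=:c>0$. Then $\mu^I=c^{-1}\mu_{\mid I}$ and $\nu^I=c^{-1}\nu_{\mid I}$, so $\langle\nu^I-\mu^I,\Phi_x\rangle=c^{-1}\langle \nu_{\mid I}-\mu_{\mid I},\Phi_x\rangle\ge 0$, and positive scaling preserves the inequality. Note that the equality $\mu[I]=\nu[I]$ already follows from the agreement of $\mu$ and $\nu$ off $I$ together with total mass one, so this hypothesis is automatic apart from positivity. There is no real obstacle here. The only step needing care is the use of the ideal property to guarantee $[x,z]_{\mathcal P}\subseteq I$, which is what makes the hockey-stick functions of $\mathcal P_{\mid I}$ coincide with those of $\mathcal P$ on $I$.
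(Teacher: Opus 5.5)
Your proposal is correct and follows the paper's argument essentially verbatim. You identify the hockey-stick functions of $\mathcal P_{\mid I}$ with those of $\mathcal P$ via the ideal property, use $\mu[z]=\nu[z]$ off $I$ to extend the sum to all of $\mathcal P$, and rescale by $\mu[I]$ for the conditional version; your remark that $\mu[I]=\nu[I]$ already follows from the hypotheses, so that only positivity is a genuine extra assumption, is also correct.
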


\begin{proof}
Fix $x$ in $I$. Using \eqref{eq:phi-ideal} and the definition of $\mu_{\mid I}$ and $\nu_{\mid I}$, we have
\begin{align*}
\langle \nu_{\mid I}-\mu_{\mid I},\Phi_x^{\mathcal P_{\mid I}}\rangle
&=
\sum_{z\in I}\Phi_x^{\mathcal P_{\mid I}}(z)\,\bigl(\nu_{\mid I}[z]-\mu_{\mid I}[z]\bigr)
=
\sum_{z\in I}\Phi_x^{\mathcal P}(z)\,\bigl(\nu[z]-\mu[z]\bigr).
\end{align*}
Since $\mu[z]=\nu[z]$ for all $z\notin I$, we may extend the sum to $\mathcal P$ without changing its value:
\begin{align*}
\sum_{z\in I}\Phi_x^{\mathcal P}(z)\,\bigl(\nu[z]-\mu[z]\bigr)
&=
\sum_{z\in \mathcal P}\Phi_x^{\mathcal P}(z)\,\bigl(\nu[z]-\mu[z]\bigr)=
\langle \nu-\mu,\Phi_x^{\mathcal P}\rangle.
\end{align*}
By the hypothesis $\mu\preceq_{HS}\nu$ on $\mathcal P$, the last quantity is nonnegative for every $x\in\mathcal P$,
and in particular for every $x$ in $I$. This proves $\mu_{\mid I}\preceq_{HS}\nu_{\mid I}$ on $\mathcal P_{\mid I}$. Moreover, if $\mu[I]=\nu[I]>0$, then for every $x\in I$,
\begin{align*}
\big\langle \nu^{I}-\mu^{I},\Phi_x^{\mathcal P_{\mid I}}\big\rangle
=
\frac{1}{\mu[I]}\big\langle \nu_{\mid I}-\mu_{\mid I},\Phi_x^{\mathcal P_{\mid I}}\big\rangle
\ge 0,
\end{align*}
which yields $\mu^{I}\preceq_{HS}\nu^{I}$.
\end{proof}

\subsubsection{Young diagrams}

The combination of chains with direct products and ideal restrictions yields posets compatible with hockey-stick domination. Starting from measures on finite chains, products generate rectangular grids, and ideal restrictions produce Young diagram shapes. Every Young diagram poset associated with a partition
$\lambda=(\lambda_1\ge\cdots\ge\lambda_r)$ admits the representation
\begin{align*}
I_\lambda
\;\subseteq\;
C_r \times C_{\lambda_1},
\qquad
I_\lambda
=
\{(i,j)\in C_r\times C_{\lambda_1} : j\le \lambda_i\},
\end{align*}
that is, $I_\lambda$ is an order ideal of a grid.

\medskip

\noindent
By the compatibility of hockey-stick domination under direct products, and its preservation under ideal restrictions whenever the measures coincide outside the ideal, dominated measures constructed on the ambient grid induce hockey-stick dominated restrictions on the corresponding Young diagram. In particular, whenever the common mass assigned to $I_\lambda$ is positive, the conditional restrictions give hockey-stick dominated probability measures on the Young diagram.

\medskip

\noindent
In general, the main takeaway from the preceding results is that they allow the problem of establishing hockey-stick domination on a finite poset to be reduced to the corresponding problem on smaller, and typically simpler, posets through suitable poset operations.

%==============================================================
\section{Convex domination}\label{sec:6}

\noindent
In the previous sections we introduced a notion of domination tailored to our framework, namely hockey-stick domination, which arises naturally in connection with Theorem~\ref{thm:main}. One may then ask how this order is related to a larger class of test functions.\\

\noindent
A natural candidate is M\"obius convex domination. This choice is motivated by Proposition~\ref{p0}~$ii)$, which shows that every function $f$ in $\mathfrak{F}$ can be written as the difference of two nonnegative increasing M\"obius convex functions.\\

\begin{definition}
We say that $\mu$ in $\mathcal{M}(\mathcal{P})$ is dominated by $\nu$ in $\mathcal{M}(\mathcal{P})$ in the M\"obius convex order, written $\mu \preceq_{\mathrm{conv}} \nu$, if for every M\"obius convex function $f \in \mathfrak{F}$ it holds that
\begin{align*}
\langle \nu-\mu,f\rangle \ge 0.
\end{align*}
\end{definition}

\noindent 
First, we observe from Proposition~\ref{p0}~$i)$ that hockey-stick functions are M\"obius convex. Consequently, M\"obius convex domination implies hockey-stick domination. In this sense, the new order is obtained by enlarging the class of test functions considered so far. Moreover, because M\"obius convex domination is characterized through a stronger class of test functions, it may induce a finer notion of domination, capable of capturing subtler differences between the objects under consideration.\\

\begin{remark}
Lemma~\ref{l1}~$v)$ shows that M\"obius convexity is rather rigid when combined with the opposite sign condition on the second M\"obius derivative. Indeed, if $f$ is M\"obius convex and $D^2[f]\leq 0$, then $D[f]$ is constant and nonpositive on each connected component of the Hasse diagram of $\mathcal P$. Thus, on each connected component, $f$ is M\"obius linear. In this sense, the only M\"obius convex functions with nonpositive second M\"obius derivative are linear-type functions. This observation explains why M\"obius convex domination imposes, in addition to hockey-stick inequalities, moment-type constraints associated with M\"obius linear functions.
\end{remark}

\noindent
The previous observation motivates the following proposition.

\begin{proposition} \label{p2}
Let $\mu,\nu\in \mathcal M(\mathcal P)$, and assume that $\mathcal P$ has a least element $0_{\mathcal P}$. If $\mu \preceq_{\mathrm{conv}} \nu,$ then
\begin{align*}
\mu \preceq_{\mathrm{HS}} \nu
\quad \text{and} \quad
m_1[\mu]=m_1[\nu].
\end{align*}
\end{proposition}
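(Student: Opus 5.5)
The proof has two parts: the hockey-stick inequalities, and the equality of first moments.

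For the first part, Proposition~\ref{p0}~$i)$ shows that every hockey-stick function $\Phi_x^{\mathcal P}$ is M\"obius convex. Applying the definition of $\mu\preceq_{\mathrm{conv}}\nu$ to $f=\Phi_x^{\mathcal P}$ for each $x\in\mathcal P$ gives $\langle\nu-\mu,\Phi_x^{\mathcal P}\rangle\ge0$, which is exactly $\mu\preceq_{HS}\nu$. No hypothesis on $\mathcal P$ is needed here.

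For the second part, the plan is to show that both $\phi_1=I[\mathbf 1]$ and $-\phi_1$ are M\"obius convex, and then test the convex order against each of them. Since $D[I[\mathbf 1]]=\mathbf 1$ is constant, we have $D[\pm\phi_1]=\pm\mathbf 1$. A constant function is trivially increasing in the sense required by the definition of M\"obius convexity, so $\pm\phi_1$ are both M\"obius convex; this is also the M\"obius linear case of Lemma~\ref{l1}~$i)$. Applying $\mu\preceq_{\mathrm{conv}}\nu$ to both functions gives $\langle\nu-\mu,\phi_1\rangle\ge0$ and $\langle\nu-\mu,\phi_1\rangle\le0$. Hence $m_1[\nu]=m_1[\mu]$.

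The least element $0_{\mathcal P}$ enters only as a consistency check and as a link to the hockey-stick family: $\phi_1(z)=|(-\infty,z]|=|[0_{\mathcal P},z]|=\Phi_{0_{\mathcal P}}^{\mathcal P}(z)$. So $\phi_1$ is itself a hockey-stick function, and the statement can be read as saying that the hockey-stick inequality at $0_{\mathcal P}$ holds with equality. I do not expect any real obstacle. The one point to state explicitly is that M\"obius convexity of $-\phi_1$ is immediate from the constancy of $D[-\phi_1]$, and this is what turns the one-sided inequality into an equality.
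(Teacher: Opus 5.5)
Your proposal is correct and follows the paper's proof. Hockey-stick functions are M\"obius convex, and testing the convex order against $\pm\phi_1$, whose M\"obius derivative is the constant $\mathbf 1$, forces $m_1[\mu]=m_1[\nu]$. The paper goes through the identification $\phi_1=\Phi_{0_{\mathcal P}}^{\mathcal P}$, whereas you use $\phi_1=I[\mathbf 1]$ directly; this correctly shows that the least-element hypothesis is not needed for the equality of first moments.
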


\begin{proof}
Since every hockey-stick function is M\"obius convex, the implication
\begin{align*}
\mu \preceq_{\mathrm{conv}} \nu
\quad \Longrightarrow \quad
\mu \preceq_{\mathrm{HS}} \nu
\end{align*}
follows directly from the definitions. It remains to prove the equality of first moments.\\

\noindent Because $\mathcal P$ has a least element, we have
\begin{align*}
\phi_1^{\mathcal P}(x)
=
I[\mathbf 1](x)
=
|(-\infty,x]|
=
|[0_{\mathcal P},x]|
=
\Phi_{0_{\mathcal P}}^{\mathcal P}(x).
\end{align*}
Moreover, $\Phi_{0_{\mathcal P}}^{\mathcal P}$ is M\"obius linear, since
\begin{align*}
D[\Phi_{0_{\mathcal P}}^{\mathcal P}]=\mathbf 1.
\end{align*}
Therefore both $\Phi_{0_{\mathcal P}}^{\mathcal P}$ and $-\Phi_{0_{\mathcal P}}^{\mathcal P}$ are M\"obius convex. Applying M\"obius convex domination to these two functions gives
\begin{align*}
\langle \nu-\mu,\Phi_{0_{\mathcal P}}^{\mathcal P}\rangle\ge0
\quad\text{and}\quad
\langle \nu-\mu,-\Phi_{0_{\mathcal P}}^{\mathcal P}\rangle\ge0.
\end{align*}
Hence
\begin{align*}
\langle \nu-\mu,\Phi_{0_{\mathcal P}}^{\mathcal P}\rangle=0.
\end{align*}
Since $\phi_1^{\mathcal P}=\Phi_{0_{\mathcal P}}^{\mathcal P}$, this is exactly
\begin{align*}
m_1[\nu]-m_1[\mu]=0.
\end{align*}
This proves the result.
\end{proof}

\noindent
As a straightforward consequence, a necessary condition for M\"obius convex domination between 
\(\mu,\nu\) in $\mathcal M(\mathcal P)$, when \(\mathcal P\) has a least element \(0_{\mathcal P}\), is that equality holds for the hockey-stick test associated with \(0_{\mathcal P}\); namely,
\begin{align*}
\langle \nu-\mu,\Phi_{0_{\mathcal P}}^{\mathcal P}\rangle =0.
\end{align*}
This shows that on posets with a least element, a refinement of hockey-stick domination is necessary to ensure M\"obius convex domination. This naturally raises the question of when such a refinement is also sufficient. An instance of this phenomenon is illustrated in the following subsection.

\subsection{Characterization of convex domination on chains} \label{scr}

The chain case is particularly important because it reveals the precise relation between hockey-stick domination and classical convex comparison principles. The following result shows that, on finite chains, convex domination is completely determined by hockey-stick domination together with equality of first moments.

\begin{proposition}
For $\mu,\nu \in \mathcal{M}(C_n)$, one has
\[
\mu \preceq_{\mathrm{conv}} \nu
\]
if and only if
\[
\mu \preceq_{\mathrm{HS}} \nu
\qquad\text{and}\qquad
m_1[\mu]=m_1[\nu].
\]
\end{proposition}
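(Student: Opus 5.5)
The forward implication is immediate from Proposition~\ref{p2}, since $C_n$ has the least element $1$. So the work lies in the converse. Assume $\mu\preceq_{\mathrm{HS}}\nu$ and $m_1[\mu]=m_1[\nu]$, and fix a M\"obius convex $f$ on $C_n$. The plan is to use the decomposition of Proposition~\ref{p0}~$ii)$,
\begin{align*}
f=\sum_{x\in C_n} D^2[f](x)\,\Phi_x^{C_n},
\end{align*}
and to check that every coefficient that could be negative multiplies a test function on which $\nu-\mu$ vanishes.

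First I compute the M\"obius function of the chain: $\mu_{C_n}(i,i)=1$, $\mu_{C_n}(i-1,i)=-1$, and all other values are zero. Hence $D[f](i)=f(i)-f(i-1)$ with the convention $f(0)=0$, so $D[f](1)=f(1)$. Likewise $D^2[f](i)=D[f](i)-D[f](i-1)$ for $i\ge 2$, and $D^2[f](1)=D[f](1)$.

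M\"obius convexity says that $D[f]$ is nondecreasing along the chain. This gives $D^2[f](i)\ge 0$ for every $i\ge 2$. The only coefficient whose sign is uncontrolled is $D^2[f](1)=f(1)$, and it multiplies $\Phi_1^{C_n}$. On a chain with least element $1$ we have $\Phi_1^{C_n}=\phi_1^{C_n}$, as noted in the proof of Proposition~\ref{p2}. Pairing the decomposition with $\nu-\mu$ then gives
\begin{align*}
\langle \nu-\mu,f\rangle
= f(1)\,\bigl(m_1[\nu]-m_1[\mu]\bigr)+\sum_{i=2}^n D^2[f](i)\,\langle \nu-\mu,\Phi_i^{C_n}\rangle .
\end{align*}
The first term vanishes by the moment hypothesis. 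Each summand in the second term is a product of two nonnegative numbers, by convexity and by hockey-stick domination. Hence $\langle\nu-\mu,f\rangle\ge 0$, which is $\mu\preceq_{\mathrm{conv}}\nu$.

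The only real care needed is the identification of $D^2[f](1)$ with the single free, sign-indefinite coefficient, together with the matching of $\Phi_1$ with $\phi_1$. This relies on the explicit chain M\"obius function, and in particular on the boundary convention at the minimal element. I expect no genuine obstacle beyond this bookkeeping. It is also worth noting that hockey-stick domination is used only at $x\ge 2$, while the test at $x=1$ is replaced by the stronger equality of first moments.
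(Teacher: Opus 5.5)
Your proof is correct and follows the paper's argument essentially step for step. The forward direction comes from Proposition~\ref{p2}. For the converse you use the decomposition of Proposition~\ref{p0}~$ii)$: convexity gives $D^2[f](i)\ge 0$ for $i\ge 2$, and the single sign-indefinite coefficient $D^2[f](1)=f(1)$ multiplies $\Phi_1^{C_n}=\phi_1^{C_n}$, whose pairing with $\nu-\mu$ vanishes by the first-moment equality.
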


\begin{proof}
The first implication follows directly from Proposition~\ref{p2}. Conversely, assume that
\[
\mu \preceq_{\mathrm{HS}} \nu
\qquad\text{and}\qquad
m_1[\mu]=m_1[\nu].
\]
Let $f$ be M\"obius convex on $C_n$. Since $C_n$ is a chain,
\[
D^2[f](i)=D[f](i)-D[f](i-1)\ge0,
\qquad i=2,\dots,n.
\]
Moreover, Proposition~\ref{p0}~ii) gives the representation
\[
f
=
\sum_{i=1}^n D^2[f](i)\,\Phi_i^{C_n}.
\]
Therefore,
\[
\langle \nu-\mu,f\rangle
=
D^2[f](1)\,\langle \nu-\mu,\Phi_1^{C_n}\rangle
+
\sum_{i=2}^n
D^2[f](i)\,\langle \nu-\mu,\Phi_i^{C_n}\rangle.
\]
Since $\Phi_1^{C_n}=\phi_1^{C_n}$, the condition
\[
m_1[\mu]=m_1[\nu]
\]
implies
\[
\langle \nu-\mu,\Phi_1^{C_n}\rangle=0.
\]
On the other hand, hockey-stick domination yields
\[
\langle \nu-\mu,\Phi_i^{C_n}\rangle\ge0,
\qquad i=1,\dots,n.
\]
Because $D^2[f](i)\ge0$ for $i=2,\dots,n$, we conclude that
\[
\langle \nu-\mu,f\rangle\ge0.
\]
Hence,
\[
\mu \preceq_{\mathrm{conv}} \nu.
\qedhere
\]
\end{proof}

The previous proposition shows that hockey-stick domination forms the core comparison structure underlying convex domination on finite chains. In particular, convex domination is recovered exactly by imposing a first-moment constraint. Thus, the chain case recovers the discrete analog of the classical characterization of convex order on $\mathbb R$.

\subsection{Constructing M\"obius convexity}

In light of the preceding results concerning the class of M\"obius convex functions, it is natural to examine this notion more closely. In particular, we investigate how M\"obius convexity behaves under the poset operations introduced above.

\begin{proposition}\label{prop:constructing-mobius-convexity}
The following properties hold.
\begin{enumerate}
\item[$i)$] For $i=1,\dots,n$, let $(\mathcal{P}_i,\le_i)$ be finite posets and let 
$f_i:\mathcal{P}_i \to \mathbb{R}$ be M\"obius convex functions such that 
\[
D_{\mathcal{P}_i}[f_i](x)\ge 0
\]
for every $x \in min(\mathcal P_i)$. Define $f:\mathcal{P}\to\mathbb{R}$ by
\[
f(x_1,\dots,x_n):=\prod_{i=1}^n f_i(x_i),
\qquad
\mathcal{P}:=\prod_{i=1}^n \mathcal{P}_i .
\]
Then $f$ is M\"obius convex on the product poset $\mathcal{P}$. Moreover, $D_{\mathcal{P}}[f]\ge 0.$

\item[$ii)$] Let $(\mathcal{P}_i,\le_i)$, $i=1,\dots,n$, be finite posets and let
$f_i:\mathcal{P}_i\to\mathbb{R}$ be M\"obius convex functions. Define $f:\mathcal{P}\to\mathbb{R}$ by
\[
f(x):=f_i(x)\quad \text{whenever } x\in \mathcal{P}_i,
\qquad
\mathcal{P}:=\bigsqcup_{i=1}^n \mathcal{P}_i .
\]
Then $f$ is M\"obius convex on the disjoint union poset $\mathcal{P}$.

\item[$iii)$] Let $(\mathcal{P},\le_{\mathcal{P}})$ and $(\mathcal{Q},\le_{\mathcal{Q}})$ be finite posets such that $\mathcal{P}$ has a greatest element $1_{\mathcal{P}}$ and $\mathcal{Q}$ has a least element $0_{\mathcal{Q}}$.  
Let $f_{\mathcal{P}}:\mathcal{P}\to\mathbb{R}$ and $f_{\mathcal{Q}}:\mathcal{Q}\to\mathbb{R}$ be M\"obius convex functions satisfying
\[
0\leq f_{\mathcal{P}}(1_{\mathcal{P}}) \quad \text{and} \quad D_{\mathcal{P}}[f_{\mathcal{P}}](1_{\mathcal{P}})
\le
f_{\mathcal{Q}}(0_{\mathcal{Q}})-f_{\mathcal{P}}(1_{\mathcal{P}}).
\]
Define $f:\mathcal{P}\oplus\mathcal{Q}\to\mathbb{R}$ by
\[
f(x):=
\begin{cases}
f_{\mathcal{P}}(x), & x\in\mathcal{P},\\
f_{\mathcal{Q}}(x), & x\in\mathcal{Q}.
\end{cases}
\]
Then $f$ is M\"obius convex on $\mathcal{P}\oplus\mathcal{Q}$.

\item[$iv)$] Let $(\mathcal{P},\le_{\mathcal{P}})$ be a finite poset and let $I\subseteq\mathcal{P}$ be an order ideal. If $f:\mathcal{P}\to\mathbb{R}$ is M\"obius convex on $\mathcal{P}$, then the restriction $f_I$ is M\"obius convex on $I$.
\end{enumerate}
\end{proposition}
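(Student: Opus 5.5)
The plan is to compute the Möbius derivative of the assembled function explicitly in each construction, expressing it through the derivatives of the pieces, and then check the monotonicity required by Möbius convexity. The common tool is that $D[f](x)$ depends only on the lower interval $(-\infty,x]$, through $f=I[D[f]]$.

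\emph{Part $i)$.} The zeta and Möbius functions of a product factor: $\zeta_{\mathcal P}=\prod_i\zeta_{\mathcal P_i}$ and $\mu_{\mathcal P}((y_i),(x_i))=\prod_i\mu_{\mathcal P_i}(y_i,x_i)$. The lower interval $(-\infty,x]$ in $\mathcal P$ is the product of the lower intervals $(-\infty,x_i]$, so the sum defining $D_{\mathcal P}[f](x)$ factorizes:
\begin{align*}
D_{\mathcal P}[f](x_1,\dots,x_n)=\prod_{i=1}^n D_{\mathcal P_i}[f_i](x_i).
\end{align*}
For each $i$, $D_{\mathcal P_i}[f_i]$ is increasing because $f_i$ is Möbius convex. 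Every element lies above some minimal element, so the hypothesis at minimal elements makes $D_{\mathcal P_i}[f_i]$ nonnegative everywhere; this is the argument in Lemma~\ref{l1}~$iv)$. A product of nonnegative functions, each increasing in its own coordinate, is nonnegative and increasing for the product order. Hence $D_{\mathcal P}[f]\ge0$ and $D_{\mathcal P}[f]$ is increasing, which gives both claims.

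\emph{Parts $ii)$ and $iv)$.} In a disjoint union, the lower interval of $x\in\mathcal P_i$ lies inside $\mathcal P_i$, so $D_{\mathcal P}[f](x)=D_{\mathcal P_i}[f_i](x)$. Comparable pairs never cross components, so monotonicity of $D_{\mathcal P}[f]$ reduces to monotonicity of each $D_{\mathcal P_i}[f_i]$. Similarly, for an order ideal $I$ and $x\in I$, we have $(-\infty,x]\subseteq I$. The Möbius function of the induced subposet on such intervals agrees with that of $\mathcal P$, since it is determined by the interval itself. Thus $D_I[f_I]=D_{\mathcal P}[f]$ on $I$, and monotonicity is inherited.

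\emph{Part $iii)$.} I expect this to be the main obstacle, because here $D[f]$ on $\mathcal Q$ genuinely changes. For $x\in\mathcal P$ the lower interval in $\mathcal P\oplus\mathcal Q$ is the one in $\mathcal P$, so $D[f]=D_{\mathcal P}[f_{\mathcal P}]$ there. For $y\in\mathcal Q$, I would write $f=I[D[f]]$ and use $(-\infty,y]=\mathcal P\cup(-\infty,y]_{\mathcal Q}$ together with $I_{\mathcal P}[D_{\mathcal P}f_{\mathcal P}](1_{\mathcal P})=f_{\mathcal P}(1_{\mathcal P})$. This gives
\begin{align*}
f_{\mathcal Q}(y)=f_{\mathcal P}(1_{\mathcal P})+I_{\mathcal Q}[D[f]|_{\mathcal Q}](y).
\end{align*}
Because $0_{\mathcal Q}$ is least, $D_{\mathcal Q}[\mathbf 1]=\delta_{0_{\mathcal Q}}$, and therefore
\begin{align*}
D[f](y)=D_{\mathcal Q}[f_{\mathcal Q}](y)-f_{\mathcal P}(1_{\mathcal P})\,\delta_{0_{\mathcal Q}}(y).
\end{align*}
In particular $D[f](0_{\mathcal Q})=f_{\mathcal Q}(0_{\mathcal Q})-f_{\mathcal P}(1_{\mathcal P})$, using $D_{\mathcal Q}[f_{\mathcal Q}](0_{\mathcal Q})=f_{\mathcal Q}(0_{\mathcal Q})$.

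Monotonicity then needs three checks.
\begin{itemize}
\item Pairs inside $\mathcal P$ follow from convexity of $f_{\mathcal P}$.
\item Pairs inside $\mathcal Q$ not involving $0_{\mathcal Q}$ are unchanged. For $0_{\mathcal Q}<y$ we have $D[f](y)=D_{\mathcal Q}f_{\mathcal Q}(y)\ge f_{\mathcal Q}(0_{\mathcal Q})\ge f_{\mathcal Q}(0_{\mathcal Q})-f_{\mathcal P}(1_{\mathcal P})$, which uses $f_{\mathcal P}(1_{\mathcal P})\ge0$.
\item For cross pairs $x\in\mathcal P$, $y\in\mathcal Q$, chain the inequalities: $D_{\mathcal P}f_{\mathcal P}(x)\le D_{\mathcal P}f_{\mathcal P}(1_{\mathcal P})\le f_{\mathcal Q}(0_{\mathcal Q})-f_{\mathcal P}(1_{\mathcal P})=D[f](0_{\mathcal Q})\le D[f](y)$. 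This is exactly where the second hypothesis is used.
\end{itemize}
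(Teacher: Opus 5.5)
Your proposal is correct and follows the paper's proof in all four parts: you factor the M\"obius function for products, use locality of $D$ on lower intervals for disjoint unions and ideals, and for the ordinal sum you compute $D[f]$ explicitly on $\mathcal Q$ and then check monotonicity. The one small difference is in part $iii)$, where you obtain $D[f](y)=D_{\mathcal Q}[f_{\mathcal Q}](y)-f_{\mathcal P}(1_{\mathcal P})\,\delta_{0_{\mathcal Q}}(y)$ from $f=I[D[f]]$ and $D_{\mathcal Q}[\mathbf 1]=\delta_{0_{\mathcal Q}}$ rather than by writing out $\mu_{\mathcal P\oplus\mathcal Q}$; you also spell out the comparison $D[f](0_{\mathcal Q})\le D[f](y)$ for $y>0_{\mathcal Q}$, which is where $f_{\mathcal P}(1_{\mathcal P})\ge0$ is needed and which the paper leaves implicit.
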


\begin{proof}
The proofs amount to tracking the behavior of the M\"obius function under the poset operations considered above.

\medskip

\noindent
$i)$ For the product poset, one has
\[
\mu_{\prod_{i=1}^n \mathcal{P}_i}
\big((x_1,\dots,x_n),(y_1,\dots,y_n)\big)
=
\prod_{i=1}^n \mu_{\mathcal{P}_i}(x_i,y_i),
\]
whenever $(x_1,\dots,x_n)\le (y_1,\dots,y_n)$. Therefore,
\[
D_{\mathcal{P}}[f](x_1,\dots,x_n)
=
\prod_{i=1}^n D_{\mathcal{P}_i}[f_i](x_i).
\]
Since each $f_i$ is M\"obius convex, $D_{\mathcal{P}_i}[f_i]$ is increasing. Moreover, by the assumed nonnegativity on minimal elements, each $D_{\mathcal{P}_i}[f_i]$ is nonnegative on $\mathcal{P}_i$. Hence the product above is nonnegative and increasing on $\mathcal P$. Consequently, $f$ is M\"obius convex and $D_{\mathcal P}[f]\ge0$.

\medskip

\noindent
$ii)$ For the disjoint union, if $x,y\in\mathcal P_j$ and $x\le y$, then
\[
\mu_{\bigsqcup_{i=1}^n \mathcal{P}_i}(x,y)
=
\mu_{\mathcal{P}_j}(x,y).
\]
Therefore,
\[
D_{\bigsqcup_{i=1}^n \mathcal{P}_i}[f](x)
=
D_{\mathcal{P}_j}[f_j](x),
\]
whenever $x\in\mathcal P_j$. Since there are no comparable elements belonging to different components, the M\"obius convexity of each $f_j$ implies the M\"obius convexity of $f$ on the disjoint union.

\medskip

\noindent
$iii)$ For the ordinal sum, the M\"obius function is given by
\[
\mu_{\mathcal{P}\oplus\mathcal{Q}}(x,y)=
\begin{cases}
\mu_{\mathcal{P}}(x,y), & x,y\in\mathcal{P},\\
\mu_{\mathcal{Q}}(x,y), & x,y\in\mathcal{Q},\\
-\delta_{x,1_{\mathcal{P}}}\delta_{y,0_{\mathcal{Q}}}, & x\in\mathcal{P},\ y\in\mathcal{Q}.
\end{cases}
\]
Consequently,
\[
D_{\mathcal{P}\oplus\mathcal{Q}}[f](x)
=
D_{\mathcal{P}}[f_{\mathcal{P}}](x),
\qquad x\in\mathcal P,
\]
whereas, for $x\in\mathcal Q$,
\[
D_{\mathcal{P}\oplus\mathcal{Q}}[f](x)=
\begin{cases}
f_{\mathcal{Q}}(0_{\mathcal{Q}})-f_{\mathcal{P}}(1_{\mathcal{P}}), & x=0_{\mathcal{Q}},\\
D_{\mathcal{Q}}[f_{\mathcal{Q}}](x), & x\neq 0_{\mathcal{Q}}.
\end{cases}
\]
Since $f_{\mathcal P}$ and $f_{\mathcal Q}$ are M\"obius convex, the only comparison that is not internal to $\mathcal P$ or $\mathcal Q$ is the transition from $\mathcal P$ to $\mathcal Q$. This is guaranteed by the conditions 
\[
0 \leq f_{\mathcal{P}}(1_{\mathcal{P}}) \quad \text{and} \quad D_{\mathcal{P}}[f_{\mathcal{P}}](1_{\mathcal{P}})
\le
f_{\mathcal{Q}}(0_{\mathcal{Q}})-f_{\mathcal{P}}(1_{\mathcal{P}}).
\]
Thus $D_{\mathcal{P}\oplus\mathcal{Q}}[f]$ is increasing, and hence $f$ is M\"obius convex.

\medskip

\noindent
$iv)$ Finally, let $I$ be an order ideal. If $x,y\in I$ and $x\le y$, then the interval $[x,y]_{\mathcal P}$ is contained in $I$. Therefore,
\[
\mu_{\mathcal P}(x,y)=\mu_I(x,y),
\]
and hence
\[
D_{\mathcal P}[f](x)=D_I[f_I](x),
\]
for every $x\in I$. Since $f$ is M\"obius convex on $\mathcal P$, the restriction $D_I[f_I]$ is increasing on $I$. Therefore $f_I$ is M\"obius convex.
\end{proof}

\begin{remark}
Observe that, by Lemma~\ref{l1}~$iv)$, the first statement concerns functions that are M\"obius  convex, increasing and nonnegative. The main takeaway of the previous result is that it provides a way of lifting M\"obius convex functions from the underlying posets to the posets generated by the operations above. In particular, it allows us to construct M\"obius convex functions on product posets, disjoint unions, ordinal sums, and order ideals in terms of M\"obius convex functions defined on the smaller components.
\end{remark}

%==============================================================

\section{Zolotarev distance}\label{sec:zolotarev}

Motivated by the real metric of Zolotarev type, we measure discrepancies between probability measures by testing against functions whose derivatives up to order two are suitably controlled. The material in this section provides the framework underlying Theorem~\ref{thm:main}, which was stated earlier in the paper in order to anticipate the main comparison result.\\

\noindent For $\mu,\nu$ in $\mathcal M_1(\mathcal P)$, we set $\rho:=\nu-\mu$. Define
\begin{equation}\label{eq:dzeta-def}
d_{\zeta}(\mu,\nu)
:=
\sup\Big\{
\bigl|\langle \rho,f\rangle\bigr|
\,:\,
f:\mathcal P\to\mathbb R,\ \ \|D^2[f]\|_{\infty}\le 1
\Big\}.
\end{equation}

\noindent Since $D$ and $I$ are defined purely from the order on $\mathcal P$, the distance $d_{\zeta}$ is invariant under poset isomorphisms. Namely, if $T:\mathcal P\to\mathcal Q$ is a poset isomorphism, then, for any $\mu,\nu$ in $\mathcal M_1(\mathcal P)$,
\[
d_{\zeta}(\mu,\nu)=d_{\zeta}(T_{\#}\mu,T_{\#}\nu).
\]
The finiteness of the poset under consideration further allows us to compare $d_{\zeta}$ with the total variation distance, as illustrated in the following.

\begin{lemma}\label{lem:mobius-chain-bound}
Let $\mathcal P$ be a finite poset with $|\mathcal P|=n \geq 2$, and let $\mu_{\mathcal P}$ denote its M\"obius function. Then
\[
\|\mu_{\mathcal P}\|_\infty
:=
\max\bigl\{|\mu_{\mathcal P}(x,y)|:\ x\le_{\mathcal P} y\bigr\}
\le 2^{\,n-2}.
\]
\end{lemma}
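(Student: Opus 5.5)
The bound follows from Philip Hall's theorem, which expresses the Möbius function through chains. For $x<y$,
\[
\mu_{\mathcal P}(x,y)=\sum_{k\ge1}(-1)^k c_k(x,y),
\]
where $c_k(x,y)$ counts chains $x=z_0<z_1<\dots<z_k=y$. Rather than rely on that identity, we argue directly by induction on the size of the interval. The recursion $\mu_{\mathcal P}(x,x)=1$ and
\[
\mu_{\mathcal P}(x,y)=-\sum_{x\le z<y}\mu_{\mathcal P}(x,z)
\]
for $x<y$ is exactly the defining system $\mu_{\mathcal P}*\zeta=\delta$. The value $\mu_{\mathcal P}(x,y)$ depends only on the interval $[x,y]$. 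So it suffices to bound $|\mu(x,y)|$ in terms of $m:=|[x,y]|\le n$.

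\textbf{Claim.} For $m\ge 2$, $|\mu(x,y)|\le 2^{m-2}$.

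First, the absolute bound. Each maximal chain structure, or more simply the recursion itself, gives $|\mu(x,y)|\le \sum_{x\le z<y}|\mu(x,z)|$. Write $M_k$ for the maximum of $|\mu(x,z)|$ over intervals of size at most $k$, with $M_1=1$. The interval $[x,z]$ for $z<y$ has size at most $m-1$. A naive bound then gives $M_m\le (m-1)M_{m-1}$, which is factorial and too weak. We need a sharper count.

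\textbf{Key step.} Use the chain count directly. By Hall's theorem,
\[
|\mu(x,y)|\le\sum_k c_k(x,y)=\#\{\text{chains from }x\text{ to }y\}.
\]
A chain from $x$ to $y$ is determined by the set of its intermediate elements. That set is a subset of the $m-2$ elements of $[x,y]\setminus\{x,y\}$. Hence the number of chains is at most $2^{m-2}\le 2^{n-2}$.

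To stay self-contained, prove Hall's inequality inductively instead. Let $N(x,y)$ be the number of chains from $x$ to $y$, with $N(x,x)=1$. Chains decompose by their last step before $y$, giving $N(x,y)=\sum_{x\le z<y}N(x,z)$. By induction $|\mu(x,z)|\le N(x,z)$, and the recursion then yields $|\mu(x,y)|\le N(x,y)$. Combining this with the subset count gives $N(x,y)\le 2^{m-2}$.

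The diagonal case $\mu(x,x)=1\le 2^{n-2}$ uses $n\ge2$, which completes the bound.

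The only delicate point is the chain-counting injection. A chain is recovered from its intermediate subset because that subset is totally ordered. So the map is injective, and the step is routine.
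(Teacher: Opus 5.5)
Your argument is correct and is essentially the paper's. Both bound $|\mu_{\mathcal P}(x,y)|$ by the total number of strict chains from $x$ to $y$ (the paper via the expansion $\mu_{\mathcal P}=\sum_{k\ge0}(-1)^k(\zeta-\delta)^k$, i.e.\ Hall's theorem, you additionally via a self-contained induction on $\mu_{\mathcal P}(x,y)=-\sum_{x\le z<y}\mu_{\mathcal P}(x,z)$), and both then bound that count by $2^{n-2}$ by identifying a chain with its set of intermediate elements, which the paper does length by length with $c_k(x,y)\le\binom{n-2}{k-1}$ while your version gives the slightly sharper $2^{|[x,y]|-2}$.
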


\begin{proof}
Let $\zeta$ denote the zeta function of $\mathcal P$ and $\delta$ the identity of the incidence algebra. Since $\mu_{\mathcal P}=\zeta^{-1}$, we may write the expansion
\[
\mu_{\mathcal P}
=
\zeta^{-1}
=
\sum_{k\ge 0}(-1)^k(\zeta-\delta)^k,
\]
which is finite because $\mathcal P$ is finite. Fix $x< y$ in $\mathcal P$. The coefficient of $(\zeta-\delta)^k$ at $(x,y)$ counts the number of strict chains
\[
x=x_0<x_1<\cdots<x_k=y
\]
of length $k$ from $x$ to $y$. Denoting this number by $c_k(x,y)$, we obtain
\[
\mu_{\mathcal P}(x,y)
=
\sum_{k\ge 0}(-1)^k\,c_k(x,y).
\]
Consequently,
\[
|\mu_{\mathcal P}(x,y)|
\le
\sum_{k\ge 0} c_k(x,y) \le
\sum_{k\ge 1}^{n-1}\binom{n-2}{k-1}\leq 2^{n-2}.
\]
If $x=y$, then $\mu_{\mathcal P}(x,x)=1\le 2^{n-2}$, since $n\ge2$. Taking the supremum over all $x\le y$ yields the claim.
\end{proof}

\noindent In the sequel, $d_{TV}$ will denote total variation distance.

\begin{proposition}\label{prop:tv-vs-dzeta}
Given $|\mathcal P|\ge2$ and $\mu,\nu\in\mathcal M_1(\mathcal P)$, it holds that
\[
d_{TV}(\mu,\nu)
\le |\mathcal P|^3\|\mu_{\mathcal P}\|_{\infty}^2\, d_{\zeta}(\mu,\nu)
\le |\mathcal P|^3 4^{|\mathcal P|-2} d_{\zeta}(\mu,\nu).
\]
Here
\[
\|\mu_{\mathcal P}\|_{\infty}
:=
\max\bigl\{\,|\mu_{\mathcal P}(x,y)|:\ x\le_{\mathcal P} y\,\bigr\}.
\]
\end{proposition}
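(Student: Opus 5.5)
We bound $d_{TV}(\mu,\nu)$ by testing $\rho:=\nu-\mu$ against a suitably normalized indicator, and we control the normalization through the second M\"obius derivative. We use the convention $d_{TV}(\mu,\nu)=\sup_{A\subseteq\mathcal P}|\rho(A)|$. Other conventions differ only by a factor of at most $2$, which the cubic factor $|\mathcal P|^3$ comfortably absorbs.

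\textbf{Step 1.} Fix $A\subseteq\mathcal P$ and set $f:=\mathbbm 1_A$. We compute
\begin{align*}
D^2[f](x)=\sum_{y\le x}\sum_{w\le y}f(w)\,\mu_{\mathcal P}(w,y)\,\mu_{\mathcal P}(y,x).
\end{align*}
Each summand is bounded in absolute value by $\|\mu_{\mathcal P}\|_\infty^2$. The number of pairs $(w,y)$ is at most $|\mathcal P|^2$, so
\begin{align*}
\|D^2[f]\|_\infty\le |\mathcal P|^2\|\mu_{\mathcal P}\|_\infty^2=:K.
\end{align*}

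\textbf{Step 2.} The function $g:=f/K$ satisfies $\|D^2[g]\|_\infty\le1$. It is therefore admissible in \eqref{eq:dzeta-def}, and we obtain
\begin{align*}
|\rho(A)|=|\langle\rho,f\rangle|\le K\,d_\zeta(\mu,\nu).
\end{align*}
Taking the supremum over $A$ gives
\begin{align*}
d_{TV}(\mu,\nu)\le |\mathcal P|^2\|\mu_{\mathcal P}\|_\infty^2\,d_\zeta(\mu,\nu).
\end{align*}
This is already stronger than the stated bound, since $|\mathcal P|^2\le|\mathcal P|^3$. The extra factor of $|\mathcal P|$ in the statement provides slack. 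It covers the $\ell^1$ convention $d_{TV}=\sum_z|\rho(z)|$: in that case we sum the singleton bound $|\rho(\{z\})|\le K d_\zeta$ over the $|\mathcal P|$ points $z$. Alternatively, the same factor covers a cruder count of triples $(w,y,x)$ in Step 1.

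\textbf{Step 3.} Lemma~\ref{lem:mobius-chain-bound} gives $\|\mu_{\mathcal P}\|_\infty^2\le 4^{|\mathcal P|-2}$. Substituting this yields the second inequality.

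The only point needing care is Step 1, namely that the double M\"obius sum is bounded uniformly in $x$ by a count of pairs. This is routine, so the main risk lies in matching the paper's normalization of $d_{TV}$, and the $|\mathcal P|^3$ factor handles that.
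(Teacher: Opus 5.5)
Your proof is correct, but it takes a more direct route than the paper's. Both arguments start from $f=\mathbbm 1_A$ and use the same pointwise estimate $|D^2[f](z)|\le\|\mu_{\mathcal P}\|_\infty^2\, I^2[\mathbf 1](z)\le\|\mu_{\mathcal P}\|_\infty^2|\mathcal P|^2$.

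The paper does not rescale $f$. Instead it expands $f=\sum_z D^2[f](z)\,\Phi_z^{\mathcal P}$ via Proposition~\ref{p0}~$ii)$. It then bounds each hockey-stick test by $|\langle\rho,\Phi_z^{\mathcal P}\rangle|\le d_\zeta(\mu,\nu)$, using $D^2[\Phi_z^{\mathcal P}]=\delta_z$. This gives $|\rho(A)|\le d_\zeta(\mu,\nu)\sum_z|D^2[f](z)|$. So the paper pays the $\ell^1$ norm of $D^2[f]$, and summing the pointwise bound over the $|\mathcal P|$ points $z$ is exactly where its factor $|\mathcal P|^3$ comes from.

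You instead use the homogeneity of the constraint $\|D^2[f]\|_\infty\le 1$ in \eqref{eq:dzeta-def}, so you pay only the $\ell^\infty$ norm. The rescaling is legitimate because $K\ge 1$, since $\mu_{\mathcal P}(x,x)=1$. This yields the sharper constant $|\mathcal P|^2\|\mu_{\mathcal P}\|_\infty^2$ and needs no hockey-stick decomposition at all.

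The paper's detour has the mild merit of showing $d_{TV}$ controlled through the family of hockey-stick tests $\langle\rho,\Phi_z^{\mathcal P}\rangle$, which ties in with the rest of the paper. For the stated inequality, though, your argument is both simpler and stronger.

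Your convention $d_{TV}(\mu,\nu)=\sup_{A}|\rho(A)|$ is the one the paper uses in its own proof. Your remark about the $\ell^1$ convention is therefore unnecessary, though it is correct.
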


\begin{proof}
Fix $A\subseteq\mathcal P$ and consider $f=\mathbbm 1_A$. As a consequence of Proposition~\ref{p0}~$ii)$,
\[
f(x)=\sum_{z\in\mathcal P} \Phi_z^{\mathcal{P}}(x)\, D^2[f](z),
\]
for $x$ in $\mathcal P$. Integrating against $\nu-\mu$, we get 
\[
|\nu[A]-\mu[A]|
=
\bigl|\langle \nu-\mu,f\rangle\bigr|
\le \sum_{z\in\mathcal P}\bigl|\langle \nu-\mu,\Phi_z^{\mathcal{P}}\rangle\bigr|\, |D^2[f](z)|.
\]
Now, by Proposition~\ref{p0}~$i)$,
\[
D^2[\Phi_z^{\mathcal P}]=\delta_z,
\]
and therefore
\[
\big\|D^2[\Phi_z^{\mathcal P}]\big\|_\infty\le 1.
\]
Consequently,
\[
\bigl|\langle \nu-\mu,\Phi_z^{\mathcal{P}}\rangle\bigr|
\le d_\zeta(\mu,\nu),
\]
for all $z$ in $\mathcal P$. We then conclude that 
\[
|\nu[A]-\mu[A]|
\le d_\zeta(\mu,\nu)\sum_{z\in\mathcal P}|D^2[f](z)|.
\]
By the definition of $D$, we have
\[
|D^2[f](z)|
\le
\sum_{x\le z}\sum_{y\le x}
|f(y)|\,|\mu_{\mathcal P}(y,x)|\,|\mu_{\mathcal P}(x,z)|
\le
\|\mu_{\mathcal P}\|_{\infty}^2\, I^2[\mathbf{1}](z)
\le
\|\mu_{\mathcal P}\|_{\infty}^2\,|\mathcal P|^2.
\]
Consequently,
\[
\sum_{z\in\mathcal P}|D^2[f](z)|
\le
\|\mu_{\mathcal P}\|_{\infty}^2\,|\mathcal P|^3.
\]
Thus,
\[
|\nu[A]-\mu[A]|
\le
\|\mu_{\mathcal P}\|_{\infty}^2\,|\mathcal P|^3\, d_{\zeta}(\mu,\nu).
\]
Taking the supremum over $A\subseteq\mathcal P$ gives the first bound.  The second bound follows from Lemma~\ref{lem:mobius-chain-bound}.
\end{proof}

\begin{remark}
The bound above is quite coarse as a function of $|\mathcal P|$. In applications where finer control of $I^2[\mathbf{1}]$ is available, the constant can most likely be improved.
\end{remark}

%==============================================================

\section{Proof of Theorem~\ref{thm:main}}\label{sec:8}
In this section we prove the main theorem of the paper that we state again for the convenience of the reader.
\begin{theorem}
Let $\mu,\nu$ be probability measures on $\mathcal P$. If $\mu\preceq_{HS}\nu$, then
\begin{align*}
d_{\zeta}(\mu,\nu)= \frac12\, m_2[\nu-\mu].
\end{align*}
\end{theorem}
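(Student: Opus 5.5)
The plan is to combine the hockey-stick expansion of Proposition~\ref{p0}~$ii)$ with the sign information supplied by $\mu\preceq_{HS}\nu$. This yields an upper bound on $d_\zeta(\mu,\nu)$ that is attained by one explicit test function, and that test function will turn out to be $\tfrac12\phi_2^{\mathcal P}$.

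\emph{Upper bound.} Write $\rho:=\nu-\mu$ and fix $f\in\mathfrak F$ with $\|D^2[f]\|_\infty\le 1$. By Proposition~\ref{p0}~$ii)$,
\begin{align*}
\langle \rho,f\rangle=\sum_{x\in\mathcal P} D^2[f](x)\,\langle\rho,\Phi_x^{\mathcal P}\rangle .
\end{align*}
Hockey-stick domination gives $\langle\rho,\Phi_x^{\mathcal P}\rangle\ge 0$ for every $x$. Together with $|D^2[f](x)|\le 1$, this yields
\begin{align*}
|\langle\rho,f\rangle|\le \sum_{x\in\mathcal P}\langle\rho,\Phi_x^{\mathcal P}\rangle=\Big\langle \rho,\sum_{x}\Phi_x^{\mathcal P}\Big\rangle .
\end{align*}

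\emph{Identifying the bound.} By Proposition~\ref{p0}~$i)$, $\Phi_x^{\mathcal P}=I^2[\delta_x]$, and $I$ is linear, so
\begin{align*}
\sum_x\Phi_x^{\mathcal P}=I^2\Big[\sum_x\delta_x\Big]=I^2[\mathbf 1]=\tfrac12\phi_2^{\mathcal P}.
\end{align*}
Hence the bound equals $\tfrac12\langle\rho,\phi_2^{\mathcal P}\rangle=\tfrac12 m_2[\nu-\mu]$, which gives $d_\zeta(\mu,\nu)\le\tfrac12 m_2[\nu-\mu]$.

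\emph{Attainment.} Take $f^\ast:=I^2[\mathbf 1]$. Since $D$ and $I$ are mutually inverse, $D^2[f^\ast]=\mathbf 1$, so $\|D^2[f^\ast]\|_\infty=1$ and $f^\ast$ is admissible in the supremum defining $d_\zeta$. Its value is $\langle\rho,f^\ast\rangle=\tfrac12 m_2[\nu-\mu]$. This quantity is nonnegative, being a sum of the nonnegative terms $\langle\rho,\Phi_x^{\mathcal P}\rangle$, so it coincides with its absolute value and the reverse inequality follows.

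There is no real obstacle in this argument. The only points needing care are that the expansion of Proposition~\ref{p0}~$ii)$ holds for arbitrary $f$ (no positivity of $D^2[f]$ is required), and that $\sum_x\Phi_x^{\mathcal P}$ is identified correctly with $I^2[\mathbf 1]$. The latter is equivalent to checking that $\sum_x|[x,z]|$ counts pairs $x\le y\le z$, which is exactly $I^2[\mathbf 1](z)$.
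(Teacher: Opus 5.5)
Your proposal is correct and follows essentially the paper's route. You expand $\langle\nu-\mu,f\rangle$ via Proposition~\ref{p0}~$ii)$, bound it by $\sum_x\langle\nu-\mu,\Phi_x^{\mathcal P}\rangle$ using hockey-stick domination, and attain the bound with $\tfrac12\phi_2^{\mathcal P}$. The only cosmetic difference is that you identify $\sum_x\Phi_x^{\mathcal P}=I^2[\mathbf 1]$ directly by linearity of $I$, whereas the paper applies the expansion to $\phi_2^{\mathcal P}$ using $D^2[\phi_2^{\mathcal P}]=2$.
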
 
\begin{proof}
    
 Let $f:\mathcal P\to\mathbb R$ be a function satisfying $\|D^2[f]\|_\infty\le 1$, and let $\mu,\nu\in\mathcal M_1(\mathcal P)$ such that $\mu\preceq_{HS}\nu$. According to Proposition \ref{p0} $ii)$
we have
\begin{align*}
\langle\nu-\mu,f\rangle
&=
\sum_{z\in\mathcal P}
D^2[f](z)\,
\langle \nu-\mu,\Phi_z^{\mathcal P}\rangle .
\end{align*}
Therefore,
\[
\bigl|\langle\nu-\mu,f\rangle\bigr|
\le
\|D^2[f]\|_\infty
\sum_{z\in\mathcal P}
\bigl|\langle \nu-\mu,\Phi_z^{\mathcal P}\rangle\bigr|
.
\]
Since $\|D^2[f]\|_\infty\le 1$ and $\mu\preceq_{HS}\nu$ it follows that
\[
\bigl|\langle\nu-\mu,f\rangle\bigr|
\le \sum_{z\in\mathcal P}
\bigl|\langle \nu-\mu,\Phi_z^{\mathcal P}\rangle{} \bigr|=
\sum_{z\in\mathcal P}
\langle \nu-\mu,\Phi_z^{\mathcal P}\rangle.
\]
\noindent As a consequence,
\[
d_\zeta(\mu,\nu)
\le
\sum_{z\in\mathcal P}
\langle \nu-\mu,\Phi_z^{\mathcal P}\rangle
.
\] On the other hand,  since $D^2[\phi_{2}^{\mathcal P}]=2$, we also have  according to Proposition \ref{p0} $ii)$

\[\langle \nu-\mu,\phi_{2}^{\mathcal P}\rangle
=
2\sum_{z\in\mathcal P}
\langle \nu-\mu,\Phi_z^{\mathcal P}\rangle,
\]  and therefore  since $\mu\preceq_{HS}\nu$
\[
d_\zeta(\mu,\nu)
\ge
\bigl|\langle\nu-\mu,\frac{\phi_{2}^{\mathcal P}}{2}\rangle\bigr|
=
\sum_{z\in\mathcal P}
\langle \nu-\mu,\Phi_z^{\mathcal P}\rangle
.
\]
This yields the identity 
\[
d_\zeta(\mu,\nu)
=
\sum_{z\in\mathcal P}
\langle \nu-\mu,\Phi_z^{\mathcal P}\rangle
.
\]  Finally, by noting that,
\[
m_2[\nu]-m_2[\mu]
=
\langle \nu-\mu,\phi_{2}^{\mathcal P}\rangle
=
2\sum_{z\in\mathcal P}
\langle \nu-\mu,\Phi_z^{\mathcal P}\rangle.
\] Combining the previous identities gives
\[
d_\zeta(\mu,\nu)
=
\frac12\bigl(m_2[\nu]-m_2[\mu]\bigr)
=
\frac12\,m_2[\nu-\mu],
\]
which proves the theorem.\\
\end{proof}

\noindent \textbf{Acknowledgements}\\
Arturo Jaramillo Gil was supported by
the grant CBF2023-2024-2088. 
Saylé Sigarreta was supported by CONAHCYT 2023-2024 project CBF2023-2024-1842.

\bibliographystyle{plain} % Choose your preferred style (plain, alpha, etc.)
\bibliography{Bib}

\begin{thebibliography}{10}

\bibitem{MR1909919}
Michael~V. Boutsikas and Eutichia Vaggelatou.
\newblock On the distance between convex-ordered random variables, with applications.
\newblock {\em Adv. in Appl. Probab.}, 34(2):349--374, 2002.

\bibitem{boyd2004convex}
Stephen Boyd and Lieven Vandenberghe.
\newblock {\em Convex Optimization}.
\newblock Cambridge University Press, Cambridge, 2004.

\bibitem{murota2003}
Kazuo Murota.
\newblock {\em Discrete Convex Analysis}.
\newblock Society for Industrial and Applied Mathematics (SIAM), Philadelphia, PA, 2003.

\bibitem{niculescu2006convex}
Constantin~P. Niculescu and Lars-Erik Persson.
\newblock {\em Convex Functions and Their Applications: A Contemporary Approach}.
\newblock CMS Books in Mathematics. Springer, New York, 2006.

\bibitem{NourdinPeccati2009}
Ivan Nourdin and Giovanni Peccati.
\newblock Stein's method on wiener chaos.
\newblock {\em Probability Theory and Related Fields}, 145(1-2):75--118, 2009.

\bibitem{n11}
Gian-Carlo Rota.
\newblock On the foundations of combinatorial theory: I. theory of m{\"o}bius functions.
\newblock In {\em Classic Papers in Combinatorics}, pages 332--360. Springer, 1964.

\bibitem{shaked2007stochastic}
Moshe Shaked and J.~George Shanthikumar.
\newblock {\em Stochastic Orders}.
\newblock Springer, New York, 2007.

\bibitem{stanley2011enumerative}
Richard~P. Stanley.
\newblock {\em Enumerative Combinatorics, Volume 1}, volume~49 of {\em Cambridge Studies in Advanced Mathematics}.
\newblock Cambridge University Press, Cambridge, second edition, 2011.

\bibitem{topkis1998}
Donald~M. Topkis.
\newblock {\em Supermodularity and Complementarity}.
\newblock Princeton University Press, Princeton, NJ, 1998.

\bibitem{trotter1992combinatorics}
William~T. Trotter.
\newblock {\em Combinatorics and Partially Ordered Sets: Dimension Theory}.
\newblock Johns Hopkins University Press, Baltimore, 1992.

\end{thebibliography}

\end{document}